







\documentclass[runningheads]{llncs}

\usepackage{amsmath}
\usepackage{amsfonts}
\usepackage{amssymb}
\usepackage{algorithm}

\usepackage{setspace}
\usepackage[misc,geometry]{ifsym}
\usepackage{cite}

\usepackage{hyperref}
\usepackage{cleveref}
\usepackage{algpseudocode}
\usepackage{subcaption}
\usepackage{tikz-layers}

\newcommand\yes{\textsc{Yes}}
\newcommand\no{\textsc{No}}
\newcommand\np{\textsc{NP}}
\newcommand{\set}[1]{\ensuremath{ \left\lbrace #1 \right\rbrace }}

\newcommand{\wps}{\textsc{Weighted Positive \mbox{2-SAT}}}
\newcommand\contracindep{\textsc{Con\-trac\-tion Blocker($\alpha$)}}
\newcommand\dcontracindep{\textsc{$d$-Con\-trac\-tion Blocker($\alpha$)}}
\newcommand\contracclique{\textsc{1-Con\-trac\-tion Blocker($\omega$)}}
\newcommand\delindep{\textsc{1-Dele\-tion Blocker($\alpha$)}}

\newcommand{\mvc}{\textsc{Vertex Cover}}
\newcommand{\mis}{\textsc{Inde\-pendent Set}}
\newcommand{\dist}{\textrm{dist}}
\DeclareMathOperator*{\merge}{merge}
\DeclareMathOperator*{\val}{val}

\setlength{\tabcolsep}{4pt}

\usetikzlibrary{arrows,automata}
\usetikzlibrary{decorations.pathreplacing, calligraphy}
\usetikzlibrary{matrix}
\usetikzlibrary{shapes.geometric}
\tikzstyle{vertex}=[thick,circle,inner sep=0.cm, minimum size=2mm, fill=white, draw=black]
\tikzstyle{hedge}=[thick]
\tikzstyle{gedge}=[thick, draw = ForestGreen, opacity = 1]
\tikzstyle{bcircle}=[thick, circle, minimum size = 15pt, draw]

\newcommand{\gadget}[2]{
	\def\radi{#2}
	\node[] at (0,0){$K_{#1}$};
	
	\draw[thick] (0,0) circle(#2);
	\tangent{0}{0}{\radi}{0}{2.4*\radi}
	\node[vertex, label = left:$v_{#1}$] (v) at (0, 2.4*\radi){};

}
\newcommand{\tangent}[5]{
	\def\r{#3};
	\def\cx{#1};
	\def\cy{#2};
	\pgfmathsetmacro\dx{#4-#1};
	\pgfmathsetmacro\dy{#5-#2};
	
	\pgfmathsetmacro{\lend}{\dx*\dx+\dy*\dy}
	\pgfmathsetmacro{\tmp}{{sqrt(\lend-\r*\r)}}
;
	
	\pgfmathsetmacro{\xa}{\cx + (\dx*\r-\dy*\tmp)*\r/\lend}
	\pgfmathsetmacro{\ya}{\cy + (\dy * \r + (\dx * \tmp))* \r / \lend}
	\pgfmathsetmacro\xb{\cx+ (\dx*\r+\dy*\tmp)*\r/\lend};
	\pgfmathsetmacro\yb{\cy+ (\dy*\r-\dx*\tmp)*\r/\lend};
	\draw[hedge](\xa,\ya) -- (#4,#5);
	\draw[hedge](\xb,\yb) -- (#4,#5);
	
}


\begin{document}

%
%
\author{Felicia Lucke\orcidID{0000-0002-9860-2928} \and
Felix Mann({\scriptsize\Letter})\orcidID{0000-0003-0016-4024}}
\authorrunning{F. Lucke and F. Mann}
%
\institute{Université de Fribourg, Boulevard de Pérolles 90, 1700 Fribourg, Switzerland
\email{\{felicia.lucke,felix.mann\}@unifr.ch}}
%

\title{Reducing Graph Parameters\break by Contractions and Deletions}
\titlerunning{Reducing Graph Parameters by Contractions and Deletions}

\maketitle

\begin{abstract}
We consider the following problem: for a given graph $G$ and two integers $k$ and $d$, can we apply a fixed graph operation at most $k$ times in order to reduce a given graph parameter $\pi$ by at least $d$?
We show that this problem is NP-hard when the parameter is the independence number and the graph operation is vertex deletion or edge contraction, even for fixed $d=1$ and when restricted to chordal graphs. 
We give a polynomial time algorithm for bipartite graphs when the operation is edge contraction, the parameter is the independence number and $d$ is fixed.
Further, we complete the complexity dichotomy on $H$-free graphs when the parameter is the clique number and the operation is edge contraction by showing that this problem is NP-hard in $(C_3+P_1)$-free graphs even for fixed $d=1$. 
When the operation is edge deletion and the parameter is the chromatic number, we determine the computational complexity of the associated problem on cographs and complete multipartite graphs.
Our results answer several open questions stated in [Diner et al., Theoretical Computer Science, 746, p. 49-72 (2012)].

\keywords{blocker problems, edge contraction, vertex deletion, edge deletion, chromatic number, inde\-pendence number, clique number}
\end{abstract}

\section{Introduction}

Blocker problems are a type of graph modification problems which are characterised by a set $\mathcal{O}$ of graph modification operations (for example vertex deletion or edge contraction), a graph parameter $\pi$ and an integer threshold $d\geq 1$. 
The aim of the problem is to determine, for a given graph $G$, the smallest sequence of operations from $\mathcal{O}$ which transforms $G$ into a graph $G'$ such that $\pi(G')\leq \pi(G)-d$.


As in the case of regular graph modification problems, we often consider a set of operations consisting each of a single graph operation, typically vertex deletion, edge contraction, edge addition or edge deletion. Amongst the parameters which have been studied are the chromatic number $\chi$ (see \cite{Chromatic}), the matching number $\mu$ (see \cite{Matching}), the length of a longest path (see \cite{Paths1,Paths2}), the (total or semitotal) domination  number $\gamma$ ($\gamma_t$ and $\gamma_{t2}$, respectively) (see \cite{TotalDom,domination,SemiTotalDom}), the clique number~$\omega$~(see \cite{clique}) and the independence number $\alpha$ (see \cite{Stable}).

In this paper, the set of allowed graph operations will always consist of only one operation, either \emph{vertex deletion}, \emph{edge contraction} or \emph{edge deletion}.
Given a graph $G$, we denote by $G-U$ the graph from which a subset of vertices $U\subseteq V(G)$ has been deleted. 
Given an edge $uv\in E(G)$, contracting the edge $uv$ means deleting the vertices $u$ and $v$ and replacing them with a single new vertex which is adjacent to every neighbour of $u$ or $v$.
We denote by $G/S$ the graph in which every edge from an edge set $S\subseteq E(G)$ has been contracted.
Further, we denote by $G-S$ the graph $G$ from which a subset of edges $S \subseteq E(G)$ has been deleted.
We consider the following problems, where $d\geq 1$ is a fixed integer.

\begin{center}
\fbox{
\begin{minipage}{4.5in}
\textsc{$d$-Deletion Blocker ($\pi$)}
\begin{description}
\item[Instance:] A graph $G$ and an integer $k$.
\item[Question:] Is there a set $U \subseteq V(G)$, $|U| \leq k$, such that \\$\pi(G-U) \leq \pi(G)-d$? 
\end{description}
\end{minipage}}

\fbox{
\begin{minipage}{4.5in}
\textsc{$d$-Contraction Blocker ($\pi$)}
\begin{description}
\item[Instance:] A graph $G$ and an integer $k$.
\item[Question:] Is there a set $S \subseteq E(G)$, $|S| \leq k$, such that $\pi(G /S) \leq \pi(G)-d$? 
\end{description}
\end{minipage}}

\fbox{
\begin{minipage}{4.5in}
\textsc{$d$-Edge Deletion Blocker ($\pi$)}
\begin{description}
\item[Instance:] A graph $G$ and an integer $k$.
\item[Question:] Is there a set $S \subseteq E(G)$, $|S| \leq k$, such that $\pi(G -S) \leq \pi(G)-d$? 
\end{description}
\end{minipage}}
\end{center}

When $d$ is not fixed but part of the input, the problems are called \textsc{Deletion Blocker}($\pi$), \textsc{Contraction Blocker}($\pi$) and \textsc{Edge Deletion Blocker($\pi$)}, respectively.

When $\pi=\alpha$ or $\pi=\omega$, we know from \cite{CliqueLatest} that \textsc{Deletion Blocker}($\pi$) and \textsc{Contraction Blocker}($\pi$) are NP-hard on general graphs. From \cite{edgedeletion} we know that \textsc{Edge Deletion Blocker($\chi$)} is NP-hard on general graphs. So it is natural to ask if these problems remain NP-hard when the input is restricted to special graph classes.

\begin{table}[ht]
\caption{\label{ComplexTable}The table of complexities for some graph classes. Here, P means solvable in polynomial time, whereas NP-h and NP-c mean NP-hard and NP-complete, respectively. A question mark means that the case is open. Everything in \textbf{bold} are new results from this paper, all other cases are referenced in \cite{CliqueLatest}, where an older version of this table is given.}
\centering
\begin{tabular}{| llllll |}
\hline
Class       & \multicolumn{2}{l}{\textsc{Contraction Blocker}($\pi$)} &  & \multicolumn{2}{l|}{\textsc{Deletion Blocker}($\pi$)} \\ 
            & $\pi=\alpha$               & $\pi=\omega$               &  & $\pi=\alpha$              & $\pi=\omega$             \\ \hline\hline
Tree        & \textsc{P}                          & \textsc{P}                          &  & \textsc{P}                         & \textsc{P}                        \\\hline
Bipartite   & \textsc{NP}-h;                       & \textsc{P}                          &  & \textsc{P}                         & \textsc{P}                        \\
					&  \hspace{4pt}\textbf{d fixed: \textsc{P}}  &  & &   &\\\hline
Cobipartite & $d=1$: \textsc{NP}-c                  & \textsc{NP}-c;            &  & \textsc{P}                         & \textsc{P}                        \\
& & \hspace{4pt} $d$ fixed: \textsc{P} & & &\\\hline
Cograph     & \textsc{P}                          & \textsc{P}                          &  & \textsc{P}                         & \textsc{P}                        \\\hline
Split       & \textsc{NP}-c;           & \textsc{NP}-c;           &  & \textsc{NP}-c;          & \textsc{NP}-c;          \\
				& \hspace{4pt} $d$ fixed: \textsc{P} &  \hspace{4pt}$d$ fixed: \textsc{P} & &  \hspace{4pt}$d$ fixed: \textsc{P} &  \hspace{4pt}$d$ fixed: \textsc{P} \\\hline
Interval    & {?}                          & \textsc{P}                          &  & {?}                           & \textsc{P}                        \\\hline
Chordal     & \textbf{d=1: \textsc{NP}-c}                      & $d=1$: \textsc{NP}-c                  &  & \textbf{d=1: \textsc{NP}-c}                      & $d=1$: \textsc{NP}-c                \\\hline
Perfect     & $d=1$: \textsc{NP}-h                  & $d=1$: \textsc{NP}-h                  &  & \textbf{d=1: \textsc{NP}-c}                      & $d=1$: \textsc{NP}-c                \\ \hline
\end{tabular}

\end{table}

The authors of \cite{CliqueLatest} show that \textsc{Contraction Blocker}($\alpha$) in bipartite and chordal graphs as well as \textsc{Deletion Blocker}($\alpha$) in chordal graphs are \textsc{NP}-hard when the threshold $d$ is part of the input.
However, as an open question, they ask for the complexity of both problems when $d$ is fixed.
In this paper, we show that \textsc{Contraction Blocker}($\alpha$) in bipartite graphs is solvable in polynomial time if $d$ is fixed and that both problems are \np-hard on chordal graphs even if $d=1$. 
An overview of the complexities in some graph classes is given in \Cref{ComplexTable}.

A \emph{monogenic} graph class is characterised by a single forbidden induced subgraph $H$. 
For a given graph parameter $\pi$, it is interesting to establish a \emph{complexity dichotomy for monogenic graphs}, that is, to determine the complexity of \textsc{\mbox{($d$-)Deletion} Blocker($\pi$)} or \textsc{($d$-)Contraction Blocker($\pi$)} in $H$-free graphs, for every graph $H$.
For example, such a dichotomy has been established for \textsc{Deletion Blocker}($\pi$) for all $\pi\in\set{\alpha,\omega,\chi}$ and \textsc{Contraction Blocker}($\pi$) for $\pi\in\set{\alpha,\chi}$ (all \cite{CliqueLatest}), \textsc{Contraction Blocker}($\gamma_{t2}$) (for $d=k=1$, \cite{SemiTotalDom}), \textsc{Contraction Blocker}($\gamma_{t}$) (for $d=k=1$, \cite{TotalDom}) and \textsc{Contraction Blocker}($\gamma$) (for $d=k=1$, \cite{domination}).
In \cite{CliqueLatest}, the computational complexity of \textsc{Contraction Blocker}($\omega$) in $H$-free graphs has been determined for every $H$ except $H=C_3+P_1$. 
We show that this case is \np-hard even when $d=1$ and complete hence the dichotomy.
For the problem \textsc{Edge Deletion Blocker}($\chi$), the authors of \cite{CliqueLatest} observe that the complexity of the problem is known for $H$-free graphs for all $H$ except $H=P_4$ and $H=P_2+P_1$.
We show that \textsc{Edge Deletion Blocker}($\chi$) is \np-complete on $(P_1+P_2)$-free graphs (and thus, for $P_4$-free graphs as well).
For $P_4$-free graphs, we show that the problem is solvable in polynomial time when the difference between $d$ and the chromatic number of the input graph is bounded.
We can also solve $d$-\textsc{Edge Deletion Blocker}($\chi$) on $P_4$-free graphs in polynomial time for any fixed $d$.


\section{Preliminaries}\label{preliminaries}

Throughout this paper, we assume that all graphs are connected unless stated differently. 

We refer the reader to \cite{diestel} for any terminology not defined here.

For any natural number $n$, we denote by $[n]$ the set $\set{1,\ldots,n}$ and by $[0..n]$ the set $\set{0,\ldots,n}$.
For a graph $G$ we denote by $V(G)$ the vertex set of the graph and by $E(G)$ its edge set.
For two graphs $G$ and $H$ we denote by $G+H$ the disjoint union of $G$ and $H$.
For two graphs $G$ and $H$ with disjoint vertex sets, we denote by $G \times H$ the graph with vertex set $V(G) \cup V(H)$ and edge set $E(G) \cup E(H) \cup \set{uv|u \in V(G), v \in V(H)}$.
For two vertices $u,v \in V(G)$, we denote by $\dist_G(u,v)$ the \emph{distance} between $u$ and $v$, which is the number of edges in a shortest path between $u$ and $v$. 
For two sets of vertices $U,W\subseteq V(G)$, the \emph{distance between $U$ and $W$}, denoted by $\dist_G(U,W)$, is given by $\min_{u\in U, w\in W}\dist_G(u,w)$. 
For a set of edges $S\subseteq E(G)$ we denote by $V(S)$ the set of vertices in $V(G)$ which are endpoints of at least one edge of $S$.
Let $v\in V(G)$, then the \emph{(open) neighbourhood of $v$}, denoted by $N_G(v)$, is the set $\set{u\in V(G):\,\dist_G(u,v) = 1}$.
The \emph{closed neighbourhood of $v$}, denoted by $N_G[v]$, is the set $N_G(v)\cup\set{v}$. 
For a set $U \subseteq V(G)$, we define the (open) neighbourhood of $U$ as $N_G(U)=\bigcup_{v\in U}N(v)$ and the closed neighbourhood of $U$ as $N_G[U] = N_G(U)\cup U$. 
If the graph $G$ is clear from the context, we can omit the index.
For a vertex $v\in V(G)$ and a set of vertices $U\subseteq V(G)$, we say that $v$ \emph{is complete to} $U$ if $v$ is adjacent to every vertex of $U$.
Let $G$ be a graph and $S\subseteq E(G)$.
We denote by $G\big\vert_S$ the graph whose vertex set is $V(G)$ and whose edge set is $S$.
For any $U\subseteq V(G)$, we denote by $G[U]$ the subgraph of $G$ induced by $U$. 
For any $U\subseteq V(G)$, we denote by $G-U$ the graph $G[V(G)\setminus U]$.
For any vertex $v\in V(G)$, we denote by $G-v$ the graph $G-\set{v}$.

Let $S \subseteq E(G)$. 
We denote by $G-S$ the graph with vertex set $V(G)$ and edge set $E(G)\setminus S$.
Further, we denote by $G/S$ the graph whose vertices are in one-to-one correspondence to the connected components of $G\big\vert_S$ and two vertices $u,v\in V(G/S)$ are adjacent if and only if their corresponding connected components $A,B$ of $G\big\vert_S$ satisfy $\dist_G(V(A),V(B))=1$.
This is equivalent to the regular notion of contracting the edges in $S$. 
However, this definition allows us to make the notation in the proofs simpler and less confusing.

An \emph{$h$-colouring} of $G$ is a map from $V(G)$ to $[h]$.
For an $h$-colouring $c$ of $G$ and a set $U\subseteq V(G)$, we denote by $c(U)$ the set $\bigcup_{v\in U}c(v)$.

We say that a set $I \subseteq V(G)$ is \emph{independent} if the vertices contained in it are pairwise non-adjacent. 
We denote by $\alpha(G)$ the size of a maximum independent set in $G$. 
The decision problem \mis \ takes as input a graph $G$ and an integer $k$ and outputs \yes\, if and only if there is an independent set of size at least $k$ in $G$. 
We say that a set $U\subseteq V(G)$ is a \emph{clique} if every two vertices in $U$ are adjacent.
We denote by $\omega(G)$ the size of a maximum clique in $G$.
We call a set $U \subseteq V(G)$ a \emph{vertex cover}, if for every edge $uv \in E(G)$ we have that $u \in U$ or $v \in U$. 
The decision problem \mvc \ takes as input a graph $G$ and an integer $k$ and outputs \yes\, if and only if there is a vertex cover of size at most $k$ in $G$.  
We denote by $\tau(G)$ the size of a minimum vertex cover in $G$. 
Furthermore, we call a graph $M$ a \emph{matching} of a graph $G$, if $V(M) \subseteq V(G)$, $E(M) \subseteq E(G)$ and each vertex in $M$ has exactly one neighbour in $M$. 
We say that a matching is a \emph{maximum matching} if it contains the maximum possible number of edges and denote this number by $\mu(G)$.
Observe that we did not use the standard definition of a matching as a set of non-adjacent edges. 
This was done in order to simplify the notation in the proofs.
However, the edge set of a matching in our definition follows the conventional definition.

A graph without cycles is called a \emph{forest} and a connected forest is a \emph{tree}.
It is well-known that a tree has one more vertex than it has edges.
Let $T$ be a tree.
We call a vertex of $T$ a \emph{leaf} if it has exactly one neighbour.
A vertex which is not a leaf is called an \emph{interior} vertex of $T$.
A tree $T$ is called \emph{rooted} if there is a designated vertex called the \emph{root} of $T$.
The children of a vertex $v \in T$ are those neighbours of $v$ whose distance to the root $s$ is larger than $\dist(v,s)$.
A \emph{rooted binary tree} is a rooted tree in which every interior vertex has exactly two children.
A graph is said to be \emph{chordal}, if it has no induced cycle of length at least four.
A graph $G$ is \emph{bipartite}, if we can find a partition of the vertices into two sets $V(G) = U \cup W$ such that $U$ and $W$ are both independent sets.
For a given graph $H$, we say that the graph $G$ is \emph{$H$-free}, if it does not contain $H$ as an induced subgraph.

A graph $G$ is called complete multipartite, if we can partition the vertex set $V(G)$ into disjoint independent sets $I_1, \dots, I_\ell$ and $G$ is isomorphic to $I_1 \times \dots \times I_\ell$.
We call the independent sets $I_1, \dots, I_\ell$ the \emph{parts} of $G$.
It is well known that complete multipartite graphs are exactly the $(P_2 + P_1)$-free graphs.

A graph $G$ is called a \emph{cograph} if one of the following conditions holds:
\begin{itemize}
    \item $G = K_1$,
    \item there are cographs $H, H'$ such that $G = H + H'$, or
    \item there are cographs $H, H'$ such that $G = H \times H'$.
\end{itemize}
A graph is a cograph if and only if it is $P_4$-free (\cite{cographP4}).

Let $T$ be a rooted binary tree with root $s$ whose interior (or non-leaf) vertices are labelled either 0 or 1.
We call the vertices of $T$ \emph{nodes} and the interior vertices \emph{0-} or \emph{1-node}, according to their label.
To every node $p $ of $T$ we associate a cograph $T_p$ as follows:
\begin{itemize}
    \item if $p$ is a leaf, then $T_p = K_1$,
    \item if $p$ is a $0$-node with children $q$ and $r$, then $T_p = T_q + T_r $,
    \item if $p$ is a $1$-node with children $q$ and $r$, then $T_p = T_q \times T_r $.
\end{itemize}
If $T_s$ is isomorphic to a cograph $G$, then we say that $T$ is a \emph{cotree} corresponding to $G$.
For a node $p\in V(T)$, we denote by $T_{\overline{p}}$ the vertex set $V(G)\setminus V(T_p)$.
It was shown in \cite{binarycotree} that every cograph has a corresponding cotree.
Let $p$ be a node of a cograph $T$ with children $q$ and $r$.
It is easy to see that $\chi(T_p)=\max\left\{\chi(T_q),\chi(T_r)\right\}$ if $p$ is a 0-node and $\chi(T_p)=\chi(T_q)+\chi(T_r)$ if $p$ is a 1-node.

For a positive integer $i$, we denote by $P_i$ and $C_i$ the path and the cycle on $i$~vertices, respectively.
We call the graph which is given in Figure \ref{paw} a \emph{paw}.
\begin{figure}
\centering
\begin{tikzpicture}
\node[vertex](c1) at(0,0){};
\node[vertex](c2) at(30:1){};
\node[vertex](c3) at(180:1){};
\node[vertex](c4) at(330:1){};
\draw[hedge](c1)--(c4);
\draw[hedge](c1)--(c3);
\draw[hedge](c1)--(c2);
\draw[hedge](c2)--(c4);
\end{tikzpicture}
\caption{\label{paw}The paw}
\end{figure}

For a given graph parameter $\pi$ we say that a set $S\subseteq E(G)$ is \emph{$\pi$-contraction-critical} if $\pi(G/S)<\pi(G)$.
We say that a set $U\subseteq V(G)$ is \emph{$\pi$-deletion-critical} if $\pi(G-U)<\pi(G)$.

We will use the following two results. The first one is due to K\H{o}nig,  the second one is well-known and easy to see.
\begin{lemma}[see \cite{diestel}]
\label{Koenig}
Let $G$ be a bipartite graph. Then $\mu(G) = \tau(G)$. 
\end{lemma}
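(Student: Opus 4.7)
The plan is to prove the two inequalities of König's identity separately. For the easy direction $\mu(G) \leq \tau(G)$, I would simply note that every vertex cover must contain at least one endpoint of each edge of a matching, and since the edges of any matching are pairwise vertex-disjoint, the cover must use at least $\mu(G)$ distinct vertices. This bound actually holds for every graph, not just bipartite ones.

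For the reverse inequality $\tau(G) \leq \mu(G)$, my approach is the classical alternating-path construction. Let $(A,B)$ be the bipartition of $G$ and let $M$ be any maximum matching. Denote by $U \subseteq A$ the set of vertices not covered by $M$, and let $Z$ be the set of all vertices reachable from $U$ via an $M$-alternating path, i.e.\ a path whose edges alternate between $E(G)\setminus M$ and $M$, starting with a non-matching edge. The heart of the proof is the claim that $C := (A \setminus Z) \cup (B \cap Z)$ is a vertex cover of $G$ of size exactly $|M|$, which immediately yields $\tau(G) \leq |M| = \mu(G)$.

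To verify that $C$ covers every edge, I would show that no edge $ab\in E(G)$ can have $a \in A \cap Z$ and $b \in B \setminus Z$: an alternating path certifying $a \in Z$ can be extended along $ab$ (with a short case distinction according to whether $ab \in M$ or not, and whether $a \in U$ or not) to produce an alternating path reaching $b$, contradicting $b \notin Z$. For the size count, I would prove that every vertex of $C$ is matched, and matched to a vertex outside $C$, pairing the vertices of $C$ injectively with edges of $M$. The vertices in $A \setminus Z$ are matched because $U \subseteq Z$, and their matching partners cannot lie in $Z$, as otherwise one could extend the alternating path back across the matching edge into $A \setminus Z$. Vertices in $B \cap Z$ must be matched, because an unmatched endpoint in $B \cap Z$ would turn the alternating path from $U$ into an $M$-augmenting path, contradicting the maximality of $M$; and their matching partners lie in $A \cap Z$ by a similar extension argument.

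The main obstacle is to carry out the alternating-path extension cleanly while keeping track of the parity of the path and which of its edges belong to $M$. Bipartiteness enters exactly once: it ensures that an alternating walk from $U$ ending at an unmatched vertex of $B$ is in fact an augmenting path, so that maximality of $M$ forces every vertex of $B \cap Z$ to be matched. Apart from this, the argument is purely combinatorial bookkeeping.
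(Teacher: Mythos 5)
Your proposal is a correct and complete rendition of the classical alternating-path proof of K\H{o}nig's theorem: the easy inequality $\mu(G)\leq\tau(G)$ from vertex-disjointness of matching edges, and the cover $C=(A\setminus Z)\cup(B\cap Z)$ of size $|M|$ built from the vertices reachable by alternating paths from the unmatched vertices of $A$. The paper itself gives no proof and simply cites the textbook \cite{diestel}, whose argument is essentially the one you describe, so there is nothing to reconcile.
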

\begin{lemma}
\label{mvc+indset}
Let $G$ be a graph and let $I\subseteq V(G)$ be a maximum independent set. Then $V(G) \setminus I$ is a minimum vertex cover and hence $\tau(G) +\alpha(G)= |V(G)|$. 
\end{lemma}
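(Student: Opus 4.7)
The plan is to establish the two halves of the statement separately: first that $V(G)\setminus I$ is a vertex cover, and then that no vertex cover can be smaller. For the first half, I would pick an arbitrary edge $uv\in E(G)$ and use independence of $I$: since $I$ contains no edge, at least one of $u,v$ lies outside $I$, hence in $V(G)\setminus I$, so every edge is covered.

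For the second half I would exploit the general complementation correspondence. If $C\subseteq V(G)$ is any vertex cover, then $V(G)\setminus C$ cannot contain both endpoints of any edge, so it is an independent set, giving $|V(G)\setminus C|\leq \alpha(G)=|I|$ and therefore $|C|\geq |V(G)|-|I|=|V(G)\setminus I|$. Combined with the first half, this shows $V(G)\setminus I$ is a minimum vertex cover and $\tau(G)=|V(G)|-\alpha(G)$, which rearranges to $\tau(G)+\alpha(G)=|V(G)|$.

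There is no real obstacle here — the argument reduces to the observation that taking complements inside $V(G)$ is an inclusion-reversing bijection between vertex covers and independent sets, so it swaps minima with maxima. The only care needed is to verify both directions (cover property and minimality); starting from only one of them would yield an inequality rather than the claimed identity.
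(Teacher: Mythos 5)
Your proof is correct and complete: both directions (that $V(G)\setminus I$ covers every edge, and that the complement of any vertex cover is independent, so none can be smaller) are exactly the standard Gallai-type complementation argument. The paper itself omits a proof, stating only that the lemma is ``well-known and easy to see,'' so your write-up simply supplies the expected argument.
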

In \cite{poljak} it was shown that \mis \ is NP-complete in $C_3$-free graphs. 
This and \Cref{mvc+indset} imply the following corollary.

\begin{corollary}
\label{mvcNPcomp}
\mvc \ is NP-complete in $C_3$-free graphs.
\end{corollary}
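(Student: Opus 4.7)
The plan is a direct polynomial-time many-one reduction from \mis{} on $C_3$-free graphs, which was shown to be NP-complete in \cite{poljak}, to \mvc{} on $C_3$-free graphs. Given an instance $(G,k)$ of \mis{} with $G$ being $C_3$-free, I would output the \mvc{} instance $(G, |V(G)|-k)$. Since the graph is not altered, triangle-freeness is automatically preserved, and the transformation is clearly computable in polynomial time.

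For correctness, I would invoke Lemma \ref{mvc+indset}: the argument behind that lemma shows more generally that a set $I\subseteq V(G)$ is independent if and only if its complement $V(G)\setminus I$ is a vertex cover, because an edge $uv$ fails to be covered by $V(G)\setminus I$ precisely when both $u$ and $v$ lie in $I$. Consequently $\alpha(G)\geq k$ if and only if $\tau(G)\leq |V(G)|-k$, which is exactly the equivalence between the two instances. Membership of \mvc{} in \np{} is immediate, as a candidate cover can be verified by scanning all edges, and combined with the hardness of \mis{} from \cite{poljak}, this yields \np-completeness of \mvc{} on $C_3$-free graphs.

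There is no genuine obstacle here; the corollary is essentially a bookkeeping consequence of Poljak's theorem and the identity $\alpha(G)+\tau(G)=|V(G)|$. The only thing worth making explicit is that the reduction preserves the graph class $C_3$-free, which it does trivially since the input graph is output verbatim.
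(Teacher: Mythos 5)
Your proof is correct and follows exactly the paper's argument: the corollary is derived from Poljak's NP-completeness of \mis{} on $C_3$-free graphs together with the identity $\alpha(G)+\tau(G)=|V(G)|$ of \Cref{mvc+indset}, with the graph left unchanged so triangle-freeness is preserved. The paper states this in one line; you have merely made the reduction explicit.
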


\section{Edge Contractions}\label{Sec:EdgeContraction}
\subsection{Algorithms}
\label{algorithms}

In this section we give a polynomial-time algorithm for $d$-\textsc{Contraction\break Blocker}($\alpha$) in bipartite graphs.

\begin{theorem}
\label{maxnumedgecontraction}
Let $G$ be a connected, bipartite graph with $|V(G)| \geq 2d+2$ and $\alpha(G) \geq d+1$, where $d\geq 1$ is an integer. Then $(G,2d+1)$ is a \yes-instance of \dcontracindep.
\end{theorem}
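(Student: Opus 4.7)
The plan is to construct the set $S$ as the edge set of a suitable subtree of $G$. Since $G$ is connected with $|V(G)|\geq 2d+2$, a breadth-first search produces a subtree $T\subseteq G$ with exactly $|V(T)|=2d+2$ vertices and hence $|E(T)|=2d+1$ edges. Setting $S:=E(T)$, I must verify that $\alpha(G/S)\leq\alpha(G)-d$.

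Contracting $E(T)$ merges $V(T)$ into a single super-vertex $w$, so any independent set $I'$ of $G/S$ either contains $w$ or not. This gives the decomposition
\[
\alpha(G/S)=\max\bigl\{\alpha(G-V(T)),\,1+\alpha(G-N_G[V(T)])\bigr\},
\]
and it suffices to prove (a) $\alpha(G-V(T))\leq\alpha(G)-d$ and (b) $\alpha(G-N_G[V(T)])\leq\alpha(G)-d-1$.

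The key observation used throughout is that, because $G$ is bipartite, the induced subgraph $G[V(T)]$ is also bipartite on $2d+2$ vertices and therefore contains an independent set (in $G$) of size at least $\lceil|V(T)|/2\rceil=d+1$. For (b), any independent set $I\subseteq V(G)\setminus N_G[V(T)]$ is non-adjacent to $V(T)$; extending it by an independent set $J\subseteq V(T)$ of size $d+1$ yields an independent set $I\cup J$ of $G$ of size $|I|+d+1\leq\alpha(G)$, hence $|I|\leq\alpha(G)-d-1$. This proves (b) for every choice of $T$.

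For (a) the situation is more delicate, because an independent set $I\subseteq V(G)\setminus V(T)$ may have neighbours in $V(T)$, so we cannot extend by an arbitrary independent subset of $V(T)$. The plan is to extend $I$ using $J\subseteq V(T)\setminus N_G(I)$ and exploit the bipartition $V(T)=(V(T)\cap A)\cup(V(T)\cap B)$: either $V(T)\cap A\setminus N_G(I\cap B)$ or $V(T)\cap B\setminus N_G(I\cap A)$ is an independent set of $G$ (each lies inside a single bipartition class) non-adjacent to $I$, and the sum of their sizes equals $2d+2-|V(T)\cap N_G(I)|$. The main obstacle is to argue that, for a suitable choice of subtree $T$, at least one of these two candidates has size $\geq d$, which would yield $|I|+d\leq\alpha(G)$. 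In most cases the bipartite inequality $\alpha(H)\geq|V(H)|/2$ applied to $H=G[V(T)\setminus N_G(I)]$ already suffices; in degenerate cases (for example, when one bipartition class of $G$ is very small) one must choose $T$ so that its vertex set ``covers'' a maximum matching of $G$ appropriately, using K\"onig's theorem (\Cref{Koenig}) to translate between matchings and vertex covers.
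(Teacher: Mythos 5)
Your decomposition of $\alpha(G/S)$ and your part (b) are correct, but part (a) — the only substantive part — is not proved, and the tree you actually propose (an arbitrary BFS prefix on $2d+2$ vertices) does not satisfy it. Concretely, take $d=1$ and let $G$ be the tree with vertices $r,x_1,x_2,x_3,x_4,z_1,z_2,z_3$ and edges $rx_i$ for $i\in\{1,2,3,4\}$ and $x_iz_i$ for $i\in\{1,2,3\}$. Here $\alpha(G)=4$ (it has the perfect matching $\{rx_4,x_1z_1,x_2z_2,x_3z_3\}$, so $\alpha(G)=8-4=4$ by \Cref{Koenig} and \Cref{mvc+indset}). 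A BFS from $r$ may return the prefix $T$ with $V(T)=\{r,x_1,x_2,x_3\}$ and $E(T)=\{rx_1,rx_2,rx_3\}$; then $G-V(T)$ is the independent set $\{x_4,z_1,z_2,z_3\}$, so $\alpha(G-V(T))=4>\alpha(G)-d$, and indeed $G/E(T)\cong K_{1,4}$ still has independence number $4$. Your fallback for (a) also collapses here: with $I=\{x_4,z_1,z_2,z_3\}$ one has $V(T)\subseteq N_G(I)$, so both candidate sets $V(T)\cap A\setminus N_G(I\cap B)$ and $V(T)\cap B\setminus N_G(I\cap A)$ are empty, and the "bipartite inequality" gives nothing. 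The sentence acknowledging that "in degenerate cases one must choose $T$ so that its vertex set covers a maximum matching appropriately" is precisely the missing content of the proof, not a degenerate afterthought.

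The paper closes exactly this gap by constructing $T$ greedily along a fixed maximum matching $M$: start from an edge of $M$ and, whenever a new vertex $w$ adjacent to $T$ is added, also add its $M$-partner (if it has one) together with the matching edge. This guarantees that every $M$-saturated vertex of $T$ has its partner inside $T$, hence at most $\lfloor |V(T)|/2\rfloor$ edges of $M$ meet $V(T)$, so $\mu(G-V(T))\geq\mu(G)-\lfloor |V(T)|/2\rfloor$ and, by \Cref{Koenig} and \Cref{mvc+indset}, $\alpha(G-V(T))\leq\alpha(G)-\lceil |V(T)|/2\rceil=\alpha(G)-d-1$. This stronger bound makes your case split unnecessary, since $\alpha(G/E(T))\leq\alpha(G-V(T))+1$ already finishes the proof. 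If you want to salvage your write-up, replace the BFS tree by this matching-guided tree and prove the bound on $\alpha(G-V(T))$ via K\H{o}nig–Gallai rather than by extending independent sets into $V(T)$.
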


\begin{proof}
Let $G$ be a bipartite graph with $|V(G)| \geq 2d+2$ and $\alpha(G) \geq d+1$. 
Let $M$ be a maximum matching of $G$. 
Since $G$ is connected, $M$ is non-empty. 
Consider the following algorithm which constructs a tree $T$, which is a subgraph of $G$.

\begin{algorithm}
\caption{}
\label{Algobuildtree}
\begin{algorithmic}[2]
\Require A bipartite graph $G$, a maximum matching $M$ in $G$, an integer $d\geq1$
\Ensure A tree $T$
\State Choose an arbitrary edge $uu'\in E(M)$.
\State Set  $V(T) = \{u,u'\}, E(T) = \{uu'\}$.
\While {$|E(T)| \leq2d-1$}
\State Choose two vertices $w\in N_G(T)\setminus V(T)$, and $w'\in N_G(w)\cap V(T)$. \label{algoline:choosev}
\If {$w\in V(M)$} 	
\State Let $v \in V(M)$ s.t. $ vw \in E(M)$.
\State $V(T) = V(T) \cup \{v,w\}$, $E(T) = E(T) \cup \{w'w,vw\}$ \label{algoline:chooseu}
\Else \ $V(T) = V(T) \cup \{w\}$, $E(T) = E(T) \cup \{w'w\}$
\EndIf
\EndWhile
\State \Return $T$
\end{algorithmic}
\end{algorithm}

We claim that the resulting graph $T$ is a tree. 
Indeed, the initial graph is a single edge and thus a tree. 
Further, observe that every time there are vertices and edges added to $T$ in lines 7 or 8, the resulting graph remains connected and the number of added vertices and added edges is the same. 
It follows that $T$ is connected and has exactly one more vertex than it has edges and is thus a tree.
It is easy to see that $T$ has $2d$ or $2d+1$ edges.

We consider the graph $G' = G-V(T)$.
For every $v \in V(M)\cap V(T)$ the unique vertex $u\in V(M)$ with $uv\in E(M)$ is also contained in $V(T)$ and $uv\in E(T)$.
Thus, there are at most $\Big \lfloor \frac{|V(T)|}{2}\Big \rfloor$ edges in $E(M)$ which have an endvertex in $T$.
Since $M - V(T)$ is a matching in $G'$ we have that $\mu(G') \geq \mu(G)-\Big \lfloor \frac{|V(T)|}{2}\Big \rfloor$.
Applying \Cref{Koenig} and \Cref{mvc+indset}, we get for the independence number of $G'$:
\begin{align*}
\alpha(G') = |V(G')| - \mu(G') &\leq |V(G)|-|V(T)|- \mu(G)+\bigg \lfloor \frac{|V(T)|}{2}\bigg \rfloor \\
&= \alpha(G) - \bigg \lceil \frac{|V(T)|}{2}\bigg \rceil = \alpha(G)-d-1.
\end{align*}
Let $G^*=G/E(T)$. 
Observe that $G\big\vert_{E(T)}$ contains exactly one connected component, say $A$, which has more than one vertex, namely the connected component corresponding to $T$.
Let $v^*\in V(G^*)$ be the vertex which corresponds to $A$.
Since $G^*-v^*$ is isomorphic to $G'$, we obtain that $\alpha(G^*) \leq \alpha(G')+1 \leq \alpha(G)-d$.
\end{proof}

\begin{algorithm}
\caption{}
\label{Algobip}
\begin{algorithmic}[2]

\Require A bipartite graph $G$, an integer $k$, a fixed integer $d$
\Ensure \textsc{Yes} if $(G,k)$ is a \yes-instance of \dcontracindep, \textsc{No} if not
\For {every $S \subseteq E(G)$ of size at most $k$}
	\State Let $\beta = 0$.
	\State Let $G'= G/S$.
	\State Let $ U=\set{v\in V(G'): v\textit{ corresponds to a connected component of }G\big\vert_S \right.$\par\hspace{150pt}  $\left.\textit{ which contains at least 2 vertices} }. $
	\For {every subset $U' \subseteq U$}
		\If {$U'$ is independent }
			\State $\beta = \max(\beta, \alpha(G'-(U\cup N_{G'}(U'))) + |U'|)$ 
		\EndIf
	\EndFor
	\If{$\beta \leq \alpha(G)-d$} 
	\State \Return \textsc{Yes} 
	\EndIf
\EndFor
\State \Return \textsc{No}
\end{algorithmic}
\end{algorithm}

\begin{theorem}
\dcontracindep \ is solvable in polynomial time in bipartite graphs.
\end{theorem}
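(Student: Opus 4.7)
The plan is to establish correctness and polynomial running time of Algorithm~\ref{Algobip}, after a short preprocessing step that bounds $k$ by $2d$. First I would handle easy cases: if $\alpha(G)\le d$, output \no, since no sequence of contractions can push $\alpha$ below $1$; if $|V(G)|\le 2d+1$, brute-force over all edge subsets in constant time, as $d$ is fixed. Otherwise $|V(G)|\ge 2d+2$ and $\alpha(G)\ge d+1$, so Theorem~\ref{maxnumedgecontraction} already certifies that $(G,2d+1)$ is a \yes-instance. Hence if $k\ge 2d+1$ we return \yes, and we may assume $k\le 2d$. This turns the outer loop of Algorithm~\ref{Algobip} into a polynomial enumeration over $O(|E(G)|^{2d})$ subsets $S$.

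Next I would verify that for each fixed $S\subseteq E(G)$ with $|S|\le k$ the value $\beta$ computed by the inner block equals $\alpha(G/S)$. Write $G'=G/S$ and let $U\subseteq V(G')$ be the set of vertices corresponding to the non-trivial connected components of $G\big\vert_S$. The identity to prove is
\[
\alpha(G')\;=\;\max_{\substack{U'\subseteq U\\ U'\text{ independent in }G'}}\Bigl(\,|U'|+\alpha\bigl(G'-(U\cup N_{G'}(U'))\bigr)\Bigr).
\]
The inequality ``$\le$'' follows by taking a maximum independent set $I$ of $G'$ and setting $U'=I\cap U$, noting that $I\setminus U$ is then an independent set of $G'$ disjoint from $U\cup N_{G'}(U')$; ``$\ge$'' is immediate, since for any independent $U'\subseteq U$ and any independent set $J$ of $G'-(U\cup N_{G'}(U'))$ the union $U'\cup J$ is independent in $G'$.

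The main obstacle is that $G'$ itself need not be bipartite after contractions (contracting an edge of $G$ can create odd cycles), so K\H{o}nig's theorem is not directly applicable to $G'$. The way around this is the observation that $V(G')\setminus U$ is in bijection with $V(G)\setminus V(S)$, and that this bijection identifies $G'[V(G')\setminus U]$ with the induced subgraph $G[V(G)\setminus V(S)]$, which is bipartite. Hence for every choice of $U'$ the graph $G'-(U\cup N_{G'}(U'))$ is bipartite, so by Lemmas~\ref{Koenig} and~\ref{mvc+indset} its independence number can be computed in polynomial time via a maximum matching.

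Finally I would tally the running time. Each non-trivial component of $G\big\vert_S$ contains at least one edge of $S$, and distinct components contribute distinct edges, so $|U|\le |S|\le 2d$; consequently the enumeration of $U'\subseteq U$ costs only $2^{|U|}\le 2^{2d}=O(1)$. Checking independence of $U'$ in $G'$ and running the bipartite matching algorithm on $G'-(U\cup N_{G'}(U'))$ are both polynomial. Combined with the $O(|E(G)|^{2d})$ outer iterations this gives an overall polynomial bound, completing the proof.
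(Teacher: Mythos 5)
Your proposal is correct and follows essentially the same route as the paper: reduce to $k\le 2d$ via Theorem~\ref{maxnumedgecontraction}, then enumerate edge subsets $S$ and compute $\alpha(G/S)$ by splitting maximum independent sets into a part inside the contracted vertices $U$ and a part in the bipartite remainder $G-V(S)$. Your justification of the identity for $\beta$ and of why $G'-(U\cup N_{G'}(U'))$ stays bipartite is, if anything, slightly more explicit than the paper's.
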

\begin{proof}
Let $G$ be a bipartite graph and $k$ a positive integer. 
If $|V(G)| \leq 2d+1$ there are at most $2^{d(d+1)}$ subsets of $E(G)$ and at most $2^{2d+1}$ subsets of $V(G)$. 
We can check for every subset $S\subseteq E(G)$ if $\alpha(G/S)\leq\alpha(G)-d$ in constant time by computing the graph $G/S$ and checking for each subset of $V(G/S)$ if it is independent. 
Thus, we can check in constant time if $G$ is a \yes-instance for \dcontracindep.

Since contracting edges in a non-empty graph cannot reduce the number of vertices to zero, it follows that if $\alpha(G) \leq d$ it is not possible to reduce $\alpha(G)$ by $d$ via edge-contractions. 
Hence, we can assume that $|V(G)| \geq 2d+2$ and $\alpha(G) \geq d+1$. 
By \Cref{maxnumedgecontraction}, we know that for $k\geq 2d+1$, it is always possible to contract at most $k$ edges to reduce the independence number of $G$ by at least d, so we can further assume that $k\leq 2d$. 


Consider now \Cref{Algobip} which takes as input $G,k$ and $d$ and outputs $\yes{}$ or $\no{}$.
\Cref{Algobip} considers every subset $S\subseteq E(G)$ of edges of cardinality at most $k$ and computes $\alpha(G/S)$. 
If there is some $S$ such that $\alpha(G/S)\leq\alpha(G)-d$ then we return $\yes$, and $\no$ otherwise. 
In order to compute $\alpha(G/S)$ for such a subset $S$ of edges, we first set $G'=G/S$ and consider the set of vertices $U\subseteq V(G')$ which have been formed by contracting some edges in $S$ (see line 4 of the algorithm). 
Observe that $G[V(G')\setminus U]$ is isomorphic to $G-V(S)$ and induces thus a bipartite graph. 
Every independent set of $G'$ can be partitioned into a set $U'\subseteq U$ and a set $W\subseteq V(G')\setminus (U\cup N_{G'}(U'))$. 
Thus, we can find the independence number of $G'$ by considering every independent subset $U'$ of $U$ and computing $\alpha(G'- (U\cup N_{G'}(U')))+\vert U'\vert$. 
The largest of these values is then $\alpha(G')$. 
The independence number of the bipartite graph $G'-(U\cup N_{G'}(U'))$ can be computed in polynomial time, see \Cref{mvc+indset} and \cite{ahuja}.

The number of subsets of $E(G)$ of cardinality at most $k$ is in $O(\vert E(G)\vert^k)=O(\vert V(G)\vert^{4d})$.
For any such subset $S$, the number of subsets $U'\subseteq U$ is at most $2^k\leq 2^{2d}$.
Thus, the running time of \Cref{Algobip} is polynomial.
\end{proof}
\subsection{Hardness proofs}
\label{hardness}

In this section, we answer several questions asked in \cite{CliqueLatest}. 
Indeed, \break \Cref{C3P1Hard} settles the missing case of \cite[Theorem 24]{CliqueLatest} and completes the complexity dichotomy for $H$-free graphs, which is given after the theorem.
We further settle the computational complexity of $1$-\contracindep{} in chordal graphs, which is an open case in \Cref{ComplexTable}.

\begin{theorem}\label{C3P1Hard}
The decision problem \contracclique \ is NP-hard in $(C_3 + P_1)$-free graphs.
\end{theorem}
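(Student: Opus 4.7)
The plan is to reduce from \mvc\ on triangle-free graphs, which is NP-complete by \Cref{mvcNPcomp}. Given a triangle-free graph $H$ with at least one edge and an integer $k$, I would construct $G$ by adjoining to $H$ a single new vertex $u$ adjacent to every vertex of $V(H)$, and set $k':=k$. The resulting $G$ is connected with $\omega(G)=3$ (every triangle of $G$ must contain $u$ because $H$ is triangle-free, and triangles exist since $H$ has an edge), and $G$ is $(C_3+P_1)$-free because $u$ lies in every triangle of $G$ and every other vertex is adjacent to $u$. It then suffices to show that $G$ admits an edge set $S$ of size at most $k$ with $\omega(G/S)\leq 2$ if and only if $H$ admits a vertex cover of size at most $k$.

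For the forward direction, given a vertex cover $C\subseteq V(H)$ of size at most $k$, I would contract $S:=\{uc : c \in C\}$. This merges $\{u\}\cup C$ into a single vertex whose remaining neighbours form exactly $V(H)\setminus C$, which is an independent set in $H$. Hence $G/S$ is a star and is triangle-free, giving $\omega(G/S)\leq 2=\omega(G)-1$.

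The main obstacle is the converse. Suppose $S\subseteq E(G)$ satisfies $|S|\leq k$ and $\omega(G/S)\leq 2$. Let $A'\subseteq V(H)$ consist of the $H$-vertices lying in the same component of $G\big\vert_S$ as $u$, and let $P_1,\dots,P_c$ be the remaining components of $G\big\vert_S$, which partition $V(H)\setminus A'$. The key structural step is to prove that each $P_i$ coincides with a full connected component of $H[V(H)\setminus A']$. On one hand, each $P_i$ is connected using edges of $G\big\vert_S$ whose endpoints both lie in $V(H)$, i.e.\ through $H$-edges, so $P_i$ is contained in one component of $H[V(H)\setminus A']$. On the other hand, if some $H$-edge had endpoints in distinct $P_i,P_j$, then the three vertices of $G/S$ corresponding to $u$'s component, $P_i$ and $P_j$ would be pairwise adjacent (since $u$ is universal), producing a triangle and contradicting $\omega(G/S)\leq 2$. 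Therefore $c$ equals the number $c_H$ of connected components of $H[V(H)\setminus A']$.

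To close the argument, a standard spanning-forest lower bound on the components of $G\big\vert_S$ yields $|S|\geq |V(G)|-(c+1)=|V(H)|-c_H$, and choosing one vertex from each component of $H[V(H)\setminus A']$ produces an independent set of $H$ of size $c_H$, so $c_H\leq\alpha(H)$. Combined with \Cref{mvc+indset}, this gives $|S|\geq |V(H)|-\alpha(H)=\tau(H)$, so $H$ has a vertex cover of size at most $|S|\leq k$, as desired. The structural identification of the $P_i$ is the only nontrivial step; once it is in place, the spanning-tree inequality and the identity $\alpha(H)+\tau(H)=|V(H)|$ finish the proof routinely.
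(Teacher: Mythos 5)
Your reduction is identical to the paper's (add a universal vertex to a triangle-free instance of \mvc, keep the same budget $k$), and your forward direction and the key observation in the converse — that three distinct components of $G\big\vert_S$ outside/inside $u$'s component which are pairwise at distance one yield a triangle in $G/S$ — are exactly what the paper uses. Where you genuinely diverge is in how the converse is closed. The paper explicitly constructs a vertex cover of $H$ of size $|S|$: from the component containing the universal vertex it takes every vertex except the universal one, and from every other component of $G\big\vert_S$ it takes all but one arbitrary vertex; it then verifies coverage directly via the triangle argument, using minimality of $S$ (so that each component is a tree and contributes exactly $|E(T)|$ vertices). You instead avoid constructing a cover: you show the non-$u$ components $P_1,\dots,P_c$ are exactly the connected components of $H[V(H)\setminus A']$, then combine the spanning-forest bound $|S|\geq|V(G)|-(c+1)$ with the independent set obtained by picking one vertex per component and the identity $\alpha(H)+\tau(H)=|V(H)|$ from \Cref{mvc+indset} to conclude $\tau(H)\leq|S|\leq k$. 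Your version is non-constructive but dispenses with the minimality assumption on $S$ and with the forest corollary; the paper's version hands you the cover explicitly. Both arguments are correct, and the structural step you flag as the crux (no $H$-edge between distinct $P_i,P_j$) is sound for singleton components as well, so there is no gap.
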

\begin{proof}
We use a reduction from \mvc \ in $C_3$-free graphs which is NP-complete due to \Cref{mvcNPcomp}. 
Let $(G,k)$ be an instance of \mvc \ where $G$ is a $C_3$-free graph. 
Since \mvc\ is trivial to solve on a graph without edges, we can assume that $E(G)$ is non-empty. 
We construct an instance $(G',k)$ of \contracclique \  such that $(G,k)$ is a \yes-instance of \mvc \ if and only if $(G',k)$ is a \yes-instance of \contracclique\ and $G'$ is $(C_3+P_1)$-free. 
Let $G'$ be a graph with $V(G') = V(G) \cup \set{w}$, $w\notin V(G)$, and $E(G') = E(G) \cup \{wv, v\in V(G)\}$. 
In other words, we add a universal vertex $w$ to $G$ in order to obtain $G'$.

Since $G$ is $C_3$-free, every copy of $C_3$ in $G'$ has to contain $w$. 
Furthermore, since $w$ is adjacent to every other vertex in $V(G')$, it follows that every vertex of $G'$ has distance at most one to every copy of $C_3$. 
Thus, $G'$ is $(C_3 + P_1)$-free. 
Also, note that $\omega(G') = 3$ and that every maximum clique in $G'$ is a copy of $C_3$ which contains $w$ and exactly two vertices of $V(G)$.

Let us assume that $(G,k)$ is a \yes-instance of \mvc. 
Let \break $\set{v_1,\dots, v_k}\subseteq V(G)$ be a vertex cover of $G$. 
Set $S = \set{v_iw\colon\,i \in \set{1,\dots, k}} $ and let $G^* = G'/S$. 
We claim that $S$ is $\omega$-contraction-critical. 
Notice that the contraction of an edge $vw \in S$ is equivalent to deleting the vertex $v$, since the new vertex remains adjacent to all other vertices. 
Thus, $G^*$ is isomorphic to $G-(V(S)\setminus\set{w})$. 
Since $\set{v_1,\dots, v_k}$ is a minimum vertex cover of $G$, there are no edges in $G^*-w$, meaning that $G^*$ is $C_3$-free and thus $\omega(G^*) \leq 2$. 
Hence $(G',k)$ is a \yes-instance of \contracclique.

For the other direction, assume that $(G',k)$ is a \yes-instance of \contracclique.
Let $S\subseteq E(G')$ be a minimum $\omega$-contraction-critical set of edges with $\vert S\vert\leq k$ and let $G^* = G'/S$.

We construct a set $U$ of vertices of $G$ as follows: For the connected component $T$ of $G'\big\vert_S$ that contains $w$, add every vertex of $V(T)$ except $w$ to $U$. 
For every other connected component $T$ of $G'\big\vert_S$ we add to $U$ all vertices of $V(T)$ except one, which can be chosen arbitrarily.
We claim that $U$ is a vertex cover of $G$ of size at most $k$.

To see that $\vert U\vert\leq k$, observe that for every connected component $T$ of $G'\big\vert_S$ we have added $\vert V(T)\vert -1$ vertices to $U$. 
Since $T$ is a tree (see \Cref{forest}), we have that $\vert V(T)\vert -1=\vert E(T)\vert$. 
Thus, we have added as many vertices to $U$ as there are edges in $S$ and hence $\vert U\vert =\vert S\vert\leq k$.

In order to show that $U$ is a vertex cover, suppose for a contradiction that there is an edge $uv\in E(G)$ for which neither $u$ nor $v$ is contained in $U$. 
Consider the connected components $A_u$, $A_v$ and $A_w$ of $G'\big\vert_S$ which contain $u$, $v$ and $w$, respectively. 
It follows from the construction of $U$ that in every connected component $T$ of $G'\big\vert_S$ there is at most one vertex of $T$ which is not contained in $U$. 
Hence, $A_u\neq A_v$. 
We have that $w\not\in U$ by construction, so the same argument can be used to show that $A_u\neq A_w$ and $A_v\neq A_w$. 
Thus, $A_u$, $A_v$, $A_w$ correspond to three different vertices in $G^*$ and since the components are pairwise at distance one, their corresponding vertices induce a $C_3$ in $G^*$, a contradiction to $S$ being $\omega$-contraction-critical.
Thus, $U$ is a vertex cover in $G$ and $(G,k)$ a \yes-instance of \mvc.
\end{proof}

\begin{theorem}
Let $H$ be a graph. 
If $H$ is an induced subgraph of $P_4$ or of the paw, then \textsc{Contraction Blocker}$(\omega)$ is polynomial-time solvable for $H$-free graphs, otherwise it is NP-hard or co-NP-hard for $H$-free graphs.
\end{theorem}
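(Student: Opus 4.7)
The plan is to assemble the dichotomy from results established in \cite{CliqueLatest} together with \Cref{C3P1Hard}. For the polynomial-time side, if $H$ is an induced subgraph of $P_4$, then every $H$-free graph is $P_4$-free, i.e., a cograph, and \textsc{Contraction Blocker}$(\omega)$ is known to be polynomial-time solvable on cographs (see \Cref{ComplexTable}). Symmetrically, if $H$ is an induced subgraph of the paw, then every $H$-free graph is paw-free, and a polynomial-time algorithm on paw-free graphs is already available in \cite{CliqueLatest}. Both polynomial parts therefore follow immediately from class inclusion, without new arguments.

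For the hardness side, \cite{CliqueLatest} established NP-hardness or co-NP-hardness of \textsc{Contraction Blocker}$(\omega)$ in $H$-free graphs for every graph $H$ that is not an induced subgraph of $P_4$ or of the paw, with the single exception of $H = C_3+P_1$, whose complexity had been left open. Our \Cref{C3P1Hard} closes precisely this remaining case by showing that \textsc{1-Contraction Blocker}$(\omega)$ is already NP-hard on $(C_3+P_1)$-free graphs. Inserting this new hardness result into the earlier classification directly yields the claimed dichotomy.

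The only genuine work therefore lies in the reduction already carried out in \Cref{C3P1Hard}; everything else is invocation of the existing classification together with the observation that the polynomially solvable $H$ (induced subgraphs of $P_4$ or paw) and the hard $H$ (all others, now including $C_3+P_1$) indeed partition the space of graphs. In particular, no new reduction or algorithm is required beyond \Cref{C3P1Hard} and the prior results cited above, so there is no real remaining obstacle; the argument is essentially a bookkeeping step that records how \Cref{C3P1Hard} fills the last gap in the table of known cases.
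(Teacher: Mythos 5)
Your proposal is correct and matches the paper's treatment exactly: the paper states this dichotomy without a separate proof, noting only that \Cref{C3P1Hard} settles the single case ($H=C_3+P_1$) left open in the classification of \cite{CliqueLatest}, so the theorem follows by combining the prior results with that new reduction. Your class-inclusion observations for the polynomial side are also the intended (and correct) justification.
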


In order to simplify the notation of the proof of the following theorem, we restate \textsc{Vertex Cover} as a satisfiability problem.

\vspace{7pt}
\begin{center}
\fbox{
\begin{minipage}{4.5in}
\wps
\begin{description}
\item[Instance:] A variable set $X$,  a clause set $C$ in which all clauses contain exactly two literals and every literal is positive, as well as an integer $k$.
\item[Question:] Is there a truth assignment of the variables (that is, a mapping $f\colon\, X\rightarrow \set{\textrm{true},\,\textrm{false}}$) such that at least one literal in each clause is true and there  are at most $k$ variables which are true.
\end{description}
\end{minipage}}
\end{center}

\vspace{7pt}

If $\Phi=(G,k)$ is an instance of \textsc{Vertex Cover} then taking $X=V(G)$ as the variable set and $C=\set{(u \lor w)\colon\, uw\in E(G)}$ as the set of clauses yields an instance $(X,C,k)$ of \wps\ which is clearly equivalent to $\Phi$. 
Since \textsc{Vertex Cover} is known to be NP-hard (see \Cref{mvcNPcomp}), it follows that \wps\ is NP-hard, too.

Let $G$ be a graph and $S, S' \subseteq E(G)$ such that for every connected component $A$ of $G\big\vert_S$ there is a connected component $A'$ of $G\big\vert_{S'}$ with $V(A)=V(A')$. 
Then, $G/S = G/S'$ and thus we get the following corollary.


\begin{corollary}\label{forest}
Let $G$ be a graph and $S\subseteq E(G)$ a minimal $\alpha$-contraction-critical set of edges. 
Then, $G\big\vert_S$ is a forest.
\end{corollary}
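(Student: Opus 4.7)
The plan is to derive the result directly from the observation immediately preceding the corollary. Suppose for contradiction that $G\big\vert_S$ is not a forest; then some edge $e\in S$ lies on a cycle of $G\big\vert_S$. Set $S'=S\setminus\{e\}$. Because $e$ lies on a cycle in $G\big\vert_S$, its two endpoints remain connected in $G\big\vert_{S'}$ through the remaining edges of that cycle, so $G\big\vert_{S'}$ has exactly the same connected components, as vertex sets, as $G\big\vert_S$.

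By the observation stated just before the corollary, this yields $G/S = G/S'$, and in particular $\alpha(G/S') = \alpha(G/S) < \alpha(G)$. Hence $S'$ is also $\alpha$-contraction-critical, but $|S'|=|S|-1<|S|$, contradicting the minimality of $S$. Therefore $G\big\vert_S$ contains no cycle and must be a forest.

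There is essentially no real obstacle: the entire argument is a one-line application of the preceding observation, the only verification needed being the standard fact that deleting a single edge of a cycle preserves the connected components of a graph. I would present the proof as a single short paragraph invoking the observation, noting explicitly that the vertex sets of the components of $G\big\vert_S$ and $G\big\vert_{S'}$ coincide, so that the hypothesis of the observation is met verbatim.
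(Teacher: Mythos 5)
Your proof is correct and follows exactly the route the paper intends: the corollary is stated as an immediate consequence of the preceding observation that components with equal vertex sets give $G/S = G/S'$, and your argument (remove an edge of a cycle, components are unchanged, contradict minimality) is precisely the omitted one-line justification.
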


\begin{theorem}
\label{contchordal}
$1$-\contracindep{} is \np-complete in chordal graphs.
\end{theorem}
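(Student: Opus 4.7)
The plan is to reduce from \textsc{Vertex Cover} on $C_3$-free graphs, which is NP-complete by \Cref{mvcNPcomp}. Given an instance $(H,k)$ with $H=(V_H,E_H)$ a $C_3$-free graph, the goal is to construct in polynomial time a chordal graph $G'$ together with an integer $k'$ such that $H$ admits a vertex cover of size at most $k$ if and only if $(G',k')$ is a yes-instance of $1$-\contracindep{}. The intended construction embeds $H$ into a chordal (likely split-like) structure that has a clique backbone containing a copy of $V_H$, a variable gadget attached to each $v\in V_H$ (typically a pendant edge or a small clique-with-pendant) whose contraction represents placing $v$ into the cover, and an edge gadget attached to each $e\in E_H$ contributing to the maximum independent set of $G'$ in such a way that its contribution can only be neutralised by contracting an edge inside the variable gadget of an endpoint of~$e$. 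Chordality is then verified directly from the construction; in a split-like setup any induced cycle of length at least four must contain two backbone vertices and hence be chorded.

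The forward direction is routine: given a vertex cover $U\subseteq V_H$ with $|U|\le k$, contract the designated edge in the variable gadget of each $v\in U$ and check that every maximum independent set of $G'$ contains a vertex which becomes adjacent in the quotient either to the merged vertex of some contracted gadget or to another vertex of the set, so the set is no longer independent in $G'/S$.

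The backward direction is the heart of the reduction. Given $S\subseteq E(G')$ with $|S|\le k'$ and $\alpha(G'/S)<\alpha(G')$, one may replace $S$ by a minimal $\alpha$-contraction-critical subset, which by \Cref{forest} forms a forest in $G'$. A case analysis on the trees of this forest yields a set $U\subseteq V_H$ with $|U|\leq k$, essentially by extracting one vertex of $V_H$ per tree. One then shows that if $U$ were not a vertex cover of $H$, the edge gadget corresponding to some uncovered edge $e\in E_H$ would produce a maximum independent set of $G'$ that survives the contraction of $S$, contradicting $\alpha(G'/S)<\alpha(G')$; hence $U$ is a vertex cover of the required size.

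The main obstacle is to design the gadgets rigidly enough that no cheap contraction (for instance a single edge) can fortuitously reduce $\alpha(G')$ by~$1$, while simultaneously keeping $G'$ chordal; this tension between the rigidity of the maximum independent set structure and the chordality constraint is where the principal creative work lies. Membership of the problem in NP is obtained by supplementing $S$ with an explicit clique cover of $G'/S$ of size at most $\alpha(G')-1$, which is available in our construction because the relevant quotient graphs remain sufficiently structured.
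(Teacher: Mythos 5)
Your high-level plan coincides with the paper's: the paper also reduces from \mvc{} (recast as \wps{}), builds a chordal graph with a clique backbone of clause vertices and per-vertex clique gadgets, and invokes \Cref{forest} in the backward direction. However, the proposal stops exactly where the proof begins: you never specify the gadgets, and you explicitly defer ``the principal creative work'' of making them rigid while keeping the graph chordal. Without a concrete construction, nothing downstream can be checked --- the value of $\alpha(G')$, chordality, the forward direction, and the backward extraction are all unverifiable as written. For comparison, the paper's resolution is: for each variable $x$ a clique $K_x$ on $2k+1$ vertices with an apex $v_x$ complete to it, and for each clause $c=(x\lor y)$ a vertex $v_c$, all clause vertices forming a clique $K_C$, with $v_c$ complete to $K_x$ and $K_y$. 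Two features are load-bearing and absent from your sketch: (i) $\vert K_x\vert=2k+1$ forces the existence of some $u_x\in K_x\setminus V(S)$ for every $x$ (since $k$ edges have at most $2k$ endpoints), and these survivors are exactly what assemble into the size-$(\vert X\vert+1)$ independent set in the contradiction argument; (ii) $N(v_x)=K_x$ is a clique, so contracting an edge incident to $v_x$ is equivalent to deleting $v_x$, which is what makes the forward direction clean.

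The backward direction is also more delicate than ``one vertex of $V_H$ per tree.'' A minimal $\alpha$-contraction-critical set $S$ may contain edges lying entirely inside the clause clique $K_C$, or inside a single $K_x$, which do not naturally correspond to any cover vertex; the paper has to pass to the auxiliary graph $H$ on $K_C$ with edge set $S\cap E(G[K_C])$, show that every connected component $T$ of $H$ has a neighbouring variable gadget met by $V(S)$, and then charge the variables set to true component-by-component against $\vert E(T)\vert$ (using that $T$ is a tree) to obtain the bound $\vert S\vert\le k$ on the number of true variables. Your one-line extraction does not account for these edges and would overcount or undercount. Finally, NP membership must be argued for arbitrary chordal inputs, not only for the constructed instances; it is immediate because chordal graphs are closed under edge contraction and \mis{} is polynomial-time solvable on them, so the set $S$ itself is a verifiable certificate and no accompanying clique cover is needed.
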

\begin{proof}
It was shown in \cite{gavril} that \mis \ can be solved in polynomial time for chordal graphs. 
Since the family of chordal graphs is closed under edge contractions, for a given chordal graph $G$ and a set $S\subseteq E(G)$, it is possible to check in polynomial time whether $S$ is $\alpha$-contraction-critical. 
It follows that 1-\contracindep{} is in \np\ for chordal graphs. 
In order to show \mbox{\np-hardness}, we reduce from \wps, which was shown to be \np-hard above. 
Let $\Phi=(X,C,k)$ be an instance of \wps. 
We construct a chordal graph $G$ such that $(G,k)$ is a \yes-instance for 1-\contracindep{} if and only if $\Phi$ is a \yes-instance for \wps, as follows:

For every variable $x\in X$, we introduce a set of vertices $G_x$ with $G_x=\set{v_x}\cup K_x$, where $K_x$ is a set of $2k+1$ vertices which induce a clique. 
We make $v_x$ complete to $K_x$. 
For every clause $c\in C$, we introduce a vertex $v_c$. We define $K_C = \bigcup_{c \in C} \set{v_c}$. We add edges so that $G[K_C]$ is a clique. For every clause $c\in C$, $c= (x\lor y)$, we make $v_c$ complete to $K_{x}$ and $K_{y}$ (see Figure 1 for an example).

Observe first that the graph $G$ is indeed chordal: if a cycle of length at least four contains at least three vertices of $K_C$, it follows immediately that the cycle cannot be induced, since $K_C$ induces a clique.
Otherwise, such a cycle contains at most two vertices of $K_C$.
If there are two vertices $w$ and $w'$ of the cycle which are contained in $G_x$ and $G_y$, respectively, with $x,y \in X, x\neq y$, then the cycle has to contain a chord in $G[K_C]$ and is thus not induced. 
If all vertices of the cycle are in $K_C\cup G_x$ for some fixed $x\in X$, then there are at least two vertices $w$ and $w'$ contained in $K_x$.
Hence, the cycle cannot be induced since $w$ and $w'$ are adjacent and have the same neighbourhood.
It follows that $G$ cannot have any induced cycle of length at least 4 and is thus chordal.

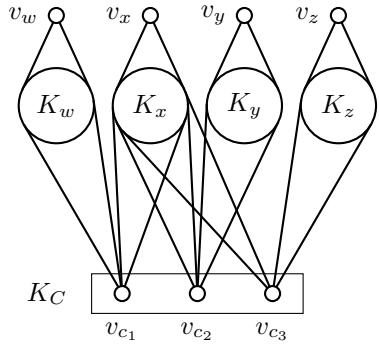
\begin{figure}
\centering
\begin{tikzpicture}
\def\radius{0.5}
\def\k{1}
\def\h{2.5}

\begin{scope}[shift = {(-1.875*\k,\h)}]
	\gadget{w}{\radius}
\end{scope}

\begin{scope}[shift = {(-0.625*\k,\h)}]
	\gadget{x}{\radius}
\end{scope}

\begin{scope}[shift = {(0.625*\k,\h)}]
	\gadget{y}{\radius}
\end{scope}

\begin{scope}[shift = {(1.875*\k,\h)}]
	\gadget{z}{\radius}
\end{scope}


\tangent{-1.875*\k}{\h}{\radius}{-1*\k}{0}

\tangent{-0.625*\k}{\h}{\radius}{0}{0}
\tangent{-0.625*\k}{\h}{\radius}{-1*\k}{0}
\tangent{-0.625*\k}{\h}{\radius}{1*\k}{0}

\tangent{0.625*\k}{\h}{\radius}{0}{0}

\tangent{1.875*\k}{\h}{\radius}{1*\k}{0}

\node[vertex, ] (c3) at (-1*\k,0){};
\node[vertex,] (c4) at (0*\k,0){};
\node[vertex,] (c5) at (1*\k,0){};
  
\node[] (c3t) at (-1*\k,-0.5){$v_{c_1}$};
\node[] (c4t) at (0*\k,-0.5){$v_{c_2}$};
\node[] (c5t) at (1*\k,-0.5){$v_{c_3}$};
  
  
\node[] (Kct) at (-2*\k,0){$K_C$};
\node[rectangle,draw, minimum width = 80*\k , minimum height = 15*\k](r) at (0*\k,0.0){};
  
\end{tikzpicture}

\caption{This is the graph corresponding to the instance of \wps \ given by the variables $w,x,y,z$ and the clauses $c_1 = w\lor x, c_2 = x\lor y$ and $c_3 = x\lor z$. The rectangular box corresponds to $G[K_C]$, the vertices contained in it induce a clique. Every set $K_i$ induces a clique and the lines between a vertex and a set $K_i$ mean that this vertex is complete to $K_i$.}
\end{figure}

Since $G_x$ induces a clique for every $x\in X$, it can contain at most one vertex in any independent set; the same applies to $K_C$. 
Thus, $\alpha(G)\leq \vert X\vert +1$. 
Let $c\in C$. 
Since the set $\{v_x \colon x\in X\}\cup \set{v_c}$ is an independent set of size $\vert X\vert +1$, it follows that $\alpha(G)=\vert X\vert +1$.

Let us assume that $\Phi$ is a \yes-instance of \wps.
Let $X_{+}$ be the set of positive variables of a satisfying assignment of $\Phi$.
For each $x \in X_{+}$, let $e_x$ be an edge incident to $v_{x}$ and let $S=\set{e_x\vert x\in X_{+}}$.
Let $G'=G/S$. We claim that $\alpha(G')<\alpha(G)$.
To see this, observe first that for any $x\in X_{+}$, contracting $e_x$ is equivalent to deleting the vertex $v_{x}$, since $N_G(v_{x})=K_{x}$ induces a clique.
Therefore, we have that $G'\simeq G-\set{v_x:\, x\in X_{+}}$.
Suppose for a contradiction that there is an independent set $I$ of $G'$ of size $|X|+1$.
Since $\vert I\cap K_x\vert\leq 1$ (for $x\in X_{+})$ and $\vert I\cap G_x\vert\leq 1$ (for all $x\in X\setminus X_{+}$), it follows that there exists $c\in C$ such that $v_c\in K_C\cap I$.
Furthermore, the inequalities above all have to be equalities.
By the choice of $X_{+}$, it follows that there is $x\in X_{+}$ such that $x$ is a literal in~$c$.
Since $\vert I\cap K_x\vert=1$, there is a vertex $w\in I\cap K_x$ which is adjacent to $v_c$, contradicting the fact that $I$ is independent.
It follows that $S$ is $\alpha$-contraction-critical.

For the other direction, assume that $\Phi' = (G,k)$ is a \yes-instance of 1-\contracindep{}. Let $S$ be a minimum $\alpha$-contraction-critical set of edges such that $\vert S\vert\leq k$. 
By \Cref{forest}, the graph $G\big\vert_S$ is a forest.

For any $x\in X$, there is a vertex $u_x\in K_x\setminus V(S)$. 
This follows from the\break fact that $k$ edges can be incident to at most $2k$ vertices and\break $\vert K_x\vert =2k+1$.  
Let $H$ be the graph with vertex set $V(H) = K_C$ and edge set $E(H) = \set{uv\in S:u,v \in K_C}$.

Suppose for a contradiction that there is a connected component $T$ of $H$ such that for every $x\in X$ with $\dist_G(G_x,V(T))=1$ we have $G_x\cap V(S)=\varnothing$.
In other words, for every $c=(x\lor y)\in C$ with $v_c\in V(T)$ we have $G_x\cap V(S)=G_y\cap V(S)=\varnothing$.
So we have that $N_G[V(T)]\cap V(S)\subseteq V(T)$, and thus $T$ is also a connected component in $G\big\vert_S$.
For every $x\in X$ the set $\set{u_x}$ is a connected component in $G\big\vert_S$, that is, $u_x$ is not incident to any edge in $S$.
Further, for every $x\in X$ where $\dist_G(G_x,V(T))=1$, we have that $G_x\cap V(S)=\varnothing$ and thus $\set{v_x}$ is a connected component in $G\big\vert_S$.
Let $X_1= \set{x\in X\colon\, \dist_G(u_x,V(T))=1}$ and $X_2=X\setminus X_1$.
Observe that the set $I=T \cup\set{\set{v_x}\colon\, x\in X_1}\cup\set{\set{u_x}\colon\, x\in X_2}$ is a set of connected components of $G\big\vert_S$ which correspond to vertices in $G/S$ who are pairwise at distance at least two.
In other words, $I$ corresponds to an independent set in $G/S$ of cardinality $\vert X\vert+1$, a contradiction to the assumption that $S$ is $\alpha$-contraction-critical.
It follows that there is no connected component $T$ of $H$ such that for every $x\in X$ with $\dist_G(G_x,V(T))=1$ we have $G_x\cap V(S)=\varnothing$.

We can obtain a truth assignment of the variables satisfying $\Phi$ as follows: Set every $x$ to true for which $G_x\cap V(S)$ is non-empty.
For every clause $c=(x\lor y)\in C$ for which both $G_x\cap V(S)$ and $G_y\cap V(S)$ are empty, set one of its variables to true. 
This assignment is clearly satisfying, it remains to show that we set at most $\vert S\vert\leq k$ variables to true. 
Consider a connected component $T$ of $H$. 
Recall that $T$ is a tree, and so its number of vertices is one more than its number of edges. 
We have shown that there is a vertex $v_c\in V(T)$, $c=(x\lor y)$, for which $G_x\cap V(S)\neq\varnothing$.
Thus, there are at most $\vert E(T)\vert$ vertices $v_c\in T$, $c=(x\lor y)$, for which both $G_x\cap V(S)$ and $G_y\cap V(S)$ are empty.
This implies that for every connected component $T$ of $H$ we set at most $\vert E(T)\vert$ variables to true.
Further, the number of variables $x\in X$ which we set to true because $G_x\cap V(S)\neq\varnothing$ is at most the number of edges of $S$ which are not contained in $G[K_C]$.
This shows that, in total, we set at most $\vert S\vert$ variables to true, which concludes the proof.
\end{proof}

\section{Vertex Deletions}\label{Sec:VxDel}

In this section, we settle another open case of \Cref{ComplexTable}.
Interestingly, \delindep{} and 1-\contracindep{} are equivalent on the instance $\Phi'$ constructed in the proof of \Cref{contchordal} and thus the same construction can be used to show \np-hardness of \delindep{} in chordal graphs.

\begin{theorem}
\label{delchordal}
\delindep{} is \np-complete in chordal graphs.
\end{theorem}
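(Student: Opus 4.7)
The plan is to mimic the strategy used for \Cref{contchordal} and exploit the hint in the paper that the two problems become effectively equivalent on the construction used there. First I would establish membership in \np{}: since the class of chordal graphs is closed under vertex deletion and \mis{} can be solved in polynomial time on chordal graphs by Gavril, one can verify in polynomial time whether a candidate $U\subseteq V(G)$ satisfies $|U|\leq k$ and $\alpha(G-U)<\alpha(G)$.

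For hardness I would reduce from \wps{} using exactly the graph $G$ built in the proof of \Cref{contchordal}: for every $x\in X$ a clique $K_x$ of size $2k+1$ together with a vertex $v_x$ complete to $K_x$, plus a clique $K_C=\{v_c:c\in C\}$ in which $v_c$ is made complete to $K_x\cup K_y$ whenever $c=(x\lor y)$. The graph is chordal and $\alpha(G)=|X|+1$, exactly as before. The key observation that links the two problems is that the neighbourhood $N_G(v_x)=K_x$ is a clique, so deleting $v_x$ has the same effect on $\alpha$ as contracting any edge from $v_x$ into $K_x$; this is what makes the forward direction transfer essentially verbatim. Concretely, given a satisfying assignment with positive set $X_+$ of size at most $k$, take $U=\{v_x:x\in X_+\}$; then $G-U$ is isomorphic to $G/S$ in the contraction proof, so by the same argument $\alpha(G-U)\leq|X|$.

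The interesting part is the backward direction, which needs a new counting argument since there is no spanning forest to lean on. Given a deletion-critical $U$ with $|U|\leq k$, I would first note that $|K_x|=2k+1$ forces $K_x\setminus U\neq\varnothing$ for every $x$, and then prove the following structural claim: if a clause $c=(x\lor y)$ satisfies $G_x\cap U=G_y\cap U=\varnothing$ and $v_c\notin U$, then the set $\{v_c,v_x,v_y\}\cup\{w_z:z\in X\setminus\{x,y\}\}$, where each $w_z$ is any vertex of $G_z\setminus U$ (which exists since $|G_z|>k$), is an independent set of size $|X|+1$ in $G-U$, contradicting deletion-criticality. Hence for every such ``uncovered'' clause we must have $v_c\in U$. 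I would then define a truth assignment by setting $x$ to true whenever $G_x\cap U\neq\varnothing$ and, for each still-uncovered clause $c=(x\lor y)$, setting one of $x,y$ to true arbitrarily.

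The only thing left is to bound the number of true variables by $|U|$, which I expect to be the main obstacle since at first glance the last step of the assignment could blow up. The bound follows from two disjoint charges: the variables made true in the first step cost at most $|U\cap\bigcup_x G_x|$ vertices (one distinct vertex of $U$ per such $x$), while the uncovered clauses, each forcing $v_c\in U$ by the claim above, cost at most $|U\cap K_C|$ vertices. These two sets of vertices are disjoint, so the total number of true variables is at most $|U\cap\bigcup_x G_x|+|U\cap K_C|=|U|\leq k$, giving a satisfying assignment and completing the reduction.
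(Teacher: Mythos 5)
Your proof is correct and follows essentially the same route as the paper: same NP-membership argument, the same reduction graph from the proof of the contraction result, the same forward direction via deleting $\{v_x : x\in X_+\}$, and the same independent set $\{v_c,v_x,v_y\}\cup\{w_z : z\in X\setminus\{x,y\}\}$ for the converse. The only (cosmetic) difference is that the paper defines the truth assignment directly from $W$ via the conditions $v_x\in W$ and $v_c\in W$ and charges each true variable to a distinct deleted vertex, whereas you set $x$ true whenever $G_x\cap U\neq\varnothing$ and charge to $U\cap\bigcup_x G_x$ and $U\cap K_C$ separately; both counts are valid.
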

\begin{proof}
It has been shown in \cite{gavril} that it is possible to determine the independence number of chordal graphs in polynomial time. 
Since chordal graphs are closed under vertex deletion, it is possible to check in polynomial time whether the deletion of a given set of vertices reduces the independence number. 
Hence \mbox{\delindep{}} is in \np \ for chordal graphs.

In order to show \np-hardness, we reduce from \wps. Let $\Phi$ be an instance of \wps,  $\Phi=(X, C,k)$. 
Let\break $\Phi'=(G,k)$ be the instance of 1-\contracindep{} which is described in \Cref{contchordal} and which has been shown to be equivalent to $\Phi$. 
Further, let $K_x, G_x$ and $v_x$ for each $x\in X$, $K_C$, and $v_c$ for each $c\in C$ be as in the proof of \Cref{contchordal}. 
Recall that we have shown that $\alpha(G)=\vert X\vert +1$ and that $G$ is chordal.

We show that $\Phi'$ is a \yes-instance of \delindep{} if and only if $\Phi$ is a \yes-instance of \wps.

Assume first that $\Phi$ is a \yes-instance of \wps \ and that $X_{+}$ is the set of positive variables in a satisfying assignment of $\Phi$. We have shown in the proof of \Cref{contchordal} that $\alpha(G- \set{v_x\colon x\in X_{+}})<\alpha(G)$, hence $(G,k)$ is a \yes-instance of \delindep{}.

Conversely, assume that $\Phi'$ is a \yes-instance of \delindep{} and let $W$ be an $\alpha$-deletion-critical set of vertices of cardinality at most $k$. 
For every $x\in X$ there is $u_x\in K_x\setminus W$, since $\vert W\vert <\vert K_x\vert$.
Define a set\break $Z=\set{x\in X\colon v_x\in W}$ and initialize a set $Z'=\varnothing$. 
For every clause $c\in C$ with $v_c \in W$ we choose one of the variables contained in $c$ and add it to $Z'$. 
We claim that setting the variables of $Z\cup Z'$ to true yields a satisfying assignment of $\Phi$. 
Observe first that $\vert Z\cup Z'\vert\leq\vert W\vert\leq k$ by construction. 
Suppose for a contradiction that there is a clause $c\in C$, $c=(x\lor y)$, such that neither $x$ nor $y$ is contained in $Z\cup Z'$. 
It follows that $v_{x},v_{y},v_c\notin W$. 
But then $\{v_c,v_{x},v_{y}\} \cup \set{u_z\colon\,z\in X\setminus \set{x,y}}$ is an independent set of size $|X|+1$ in $G-W$, a contradiction to the $\alpha$-deletion-criticalness of $W$. 
Hence the assignment is satisfying and the theorem follows.
\end{proof}

Since perfect graphs are a superclass of chordal graphs, we obtain the following corollary.

\begin{corollary}
\label{delperfect}
\delindep{} is \np-complete in perfect graphs.
\end{corollary}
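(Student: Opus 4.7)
The corollary is essentially immediate from Theorem~\ref{delchordal} together with standard facts about perfect graphs, so the plan has two simple ingredients.

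For NP-hardness, I would observe that every chordal graph is perfect, so the class of chordal graphs is contained in the class of perfect graphs. Consequently, the reduction from \wps{} constructed in the proof of Theorem~\ref{delchordal} (which produces a chordal instance $\Phi'$) is \emph{already} a reduction into the class of perfect graphs. No modification of the construction is needed: the same graph $G$ witnesses NP-hardness of \delindep{} on perfect graphs.

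For membership in NP, I would need to verify in polynomial time, given a candidate set $U\subseteq V(G)$ with $|U|\le k$, that $\alpha(G-U)\le\alpha(G)-1$. Here two facts suffice: perfect graphs are closed under taking induced subgraphs, so $G-U$ is perfect whenever $G$ is; and the independence number of a perfect graph can be computed in polynomial time by the algorithm of Grötschel, Lovász and Schrijver based on the Lovász theta function. Applying that algorithm to both $G$ and $G-U$ provides the required certificate check.

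I expect no substantive obstacle: both ingredients are standard. The only point worth being explicit about is that one must cite the polynomial-time computability of $\alpha$ on perfect graphs (Grötschel--Lovász--Schrijver), which replaces the role that Gavril's algorithm played in the chordal case. Everything else is inherited verbatim from Theorem~\ref{delchordal}.
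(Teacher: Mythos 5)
Your proposal is correct and follows essentially the same route as the paper, which derives NP-hardness directly from the chordal case via the inclusion of chordal graphs in perfect graphs (the paper also sketches an alternative via complementation and \textsc{1-Deletion Blocker}$(\omega)$). You are in fact slightly more careful than the paper in explicitly justifying membership in NP via the Gr\"otschel--Lov\'asz--Schrijver algorithm for computing $\alpha$ on perfect graphs, a point the paper leaves implicit.
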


Observe that \Cref{delperfect} could also be shown as follows.
Complements of perfect graphs are again perfect graphs.
Further, \delindep{} is a \yes-instance for a graph $G$ if and only if $\textsc{1-Dele\-tion Blocker($\omega$)}$ is a \yes-instance for $\overline{G}$.
Since it was shown in \cite{CliqueLatest} that $\textsc{1-Dele\-tion Blocker($\omega$)}$ is \np-hard in perfect graphs the corollary follows.

\section{Edge Deletions}\label{Sec:EdgeDel}

Given a colouring $c$ of the vertices of a graph $G$, we say that an edge $uv$, with $u,v\in V(G)$, is a \emph{monochromatic edge} of $c$ if $c(u)=c(v)$.
Using this terminology, a \emph{proper colouring} is a colouring without monochromatic edges.
The following problem is a generalization of \textsc{$h$-Chromatic Number} (for a definition, see for example \cite{GareyJohnson}), in the sense that we ask if there is a colouring with few monochromatic edges.

\begin{center}
\fbox{
\begin{minipage}{4.5in}
\textsc{$h$-Monochromatic Edges}
\begin{description}
\item[Instance:] A graph $G$ and an integer $m$.
\item[Question:] Is there an $h$-colouring of $G$ with at most $m$ monochromatic edges? 
\end{description}
\end{minipage}}
\end{center}

\vspace{7pt}

As above, we sometimes consider $h$ to be part of the input.
The problem is then called \textsc{Monochromatic Edges}.

To keep the notation more simple, we will focus on \textsc{Monochromatic edges} instead of \textsc{Edge Deletion Blocker}($\chi$) in this chapter.
This is justified by the following proposition.

\begin{lemma}
    The tuple $(G,m)$ is a \yes-instance for \textsc{$h$-Monochromatic Edges} if and only if $(G,m)$ is a \yes-instance for $(\chi(G)-h)\textsc{-Edge Deletion Blocker}(\chi)$.
\end{lemma}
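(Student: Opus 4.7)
The plan is to prove both directions by a straightforward correspondence between monochromatic edges of an $h$-colouring and deleted edges of a proper $h$-colouring. Set $d = \chi(G) - h$ so that the $d$-\textsc{Edge Deletion Blocker}$(\chi)$ instance asks for $S \subseteq E(G)$ with $|S| \leq m$ and $\chi(G-S) \leq \chi(G) - d = h$.

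For the forward direction, I would start with an $h$-colouring $c$ of $G$ having at most $m$ monochromatic edges and take $S$ to be exactly the set of monochromatic edges of $c$. Then $|S| \leq m$ by hypothesis, and by construction $c$ has no monochromatic edges in $G - S$, so $c$ is a proper $h$-colouring of $G-S$, giving $\chi(G - S) \leq h$. This certifies $(G,m)$ as a \yes-instance for $d$-\textsc{Edge Deletion Blocker}$(\chi)$.

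For the converse, I would assume there is a set $S \subseteq E(G)$ with $|S| \leq m$ and $\chi(G-S) \leq h$, and take any proper $h$-colouring $c'$ of $G-S$. The same map $c'$, viewed as an $h$-colouring of $G$, can only have monochromatic edges among the edges of $E(G) \setminus E(G-S) = S$, because $c'$ is proper on $G-S$. Hence the number of monochromatic edges of $c'$ on $G$ is at most $|S| \leq m$, so $(G,m)$ is a \yes-instance for \textsc{$h$-Monochromatic Edges}.

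There is essentially no obstacle here; the statement is a definitional unfolding, and the only thing worth being careful about is the bookkeeping of the offset $d = \chi(G)-h$ so that ``reducing $\chi$ by at least $d$'' matches ``admitting a proper $h$-colouring''. One might also briefly note that the statement is only meaningful when $h \leq \chi(G)$, since otherwise $d \leq 0$ and both sides are trivially \yes\ (take $S = \varnothing$ and any proper colouring of $G$ using $\chi(G) \leq h$ colours).
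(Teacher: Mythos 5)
Your proof is correct and follows exactly the same two-direction argument as the paper: monochromatic edges of an $h$-colouring become the deleted set $S$, and conversely a proper $h$-colouring of $G-S$ has monochromatic edges only within $S$. The remark about the degenerate case $h > \chi(G)$ is a harmless addition the paper omits.
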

\begin{proof}
    Let $G$ be a graph and $m$ an integer.
    If $(G,m)$ is a \yes-instance for \textsc{$h$\nobreakdash-Monochromatic Edges}, then there is an $h$-colouring $c$ of $G$ with at most $m$ monochromatic edges.
    Let $S\subseteq E(G)$ be the set of monochromatic edges of $c$.
    Then, $c$~is a proper $h$-colouring of $G-S$.
    It follows that deleting $\vert S\vert\leq m$ edges from $G$ yields a graph whose chromatic number is at most $h$.
    In other words, $(G,m)$ is a \yes-instance for $(\chi(G)-h)$-\textsc{Edge Deletion Blocker}($\chi$).
    For the other direction, assume that $(G,m)$ is a \yes-instance for $(\chi(G)-h)$-\textsc{Edge Deletion Blocker}($\chi$).
    Thus, there is a set of edges $S\subseteq E(G)$ such that $\vert S \vert\leq m$ and $\chi(G-S)\leq \chi(G)-(\chi(G)-h)=h$.
    Let $c$ be a proper $h$-colouring of $G-S$.
    When we colour the vertices of $G$ according to~$c$, then the only monochromatic edges can be the edges in $S$.
    Thus, $c$ is an $h$-colouring of $G$ with at most $m$ monochromatic edges, which completes the proof.
\end{proof}

The following lemma is a simple observation about reducing the number of monochromatic edges by recolouring the vertices.

\begin{lemma}\label{Indepsets}
    Let $G$ be a graph, and $I\subseteq V(G)$ an independent set such that for any $v\in I$ we have $N(v)=N(I)$.
    If $c$ is an $h$-colouring of $G$, then there is a colour $j\in c(I)$ such that recolouring every vertex of $I$ with $j$ yields an $h$-colouring of $G$ which has at most as many monochromatic edges as $c$.
\end{lemma}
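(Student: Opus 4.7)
My plan is to choose $j \in c(I)$ to be a colour that appears on the fewest vertices of the common neighbourhood $N(I)$, and then verify via a short averaging argument that this recolouring does not increase the number of monochromatic edges.

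First I would observe that every edge not incident to $I$ keeps both of its endpoints under the recolouring, so it is monochromatic in the new colouring if and only if it was monochromatic under $c$. Hence it suffices to control the number of monochromatic edges incident to $I$. Since $I$ is independent, every such edge goes from $I$ to $N(I)$, and since $N(v)=N(I)$ for each $v \in I$, the bipartite graph between $I$ and $N(I)$ is in fact complete.

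Next I would partition $I$ by colour: for each $j' \in c(I)$, set $I_{j'} = \{v \in I : c(v)=j'\}$ and $n_{j'} = |\{u \in N(I) : c(u)=j'\}|$. Each vertex $v \in I_{j'}$ is incident to exactly $n_{j'}$ monochromatic edges under $c$, so the total number of monochromatic edges incident to $I$ equals $M = \sum_{j' \in c(I)} |I_{j'}|\,n_{j'}$. If we recolour all of $I$ with a single colour $j \in c(I)$, the analogous count becomes $|I|\cdot n_j$.

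Finally I would take $j \in c(I)$ minimising $n_j$; then $|I|\cdot n_j = \sum_{j' \in c(I)} |I_{j'}|\,n_j \leq \sum_{j' \in c(I)} |I_{j'}|\,n_{j'} = M$, so the total number of monochromatic edges does not grow. Since $j \in c(I) \subseteq \{1,\ldots,h\}$, the recoloured map is still an $h$-colouring. I do not anticipate any real obstacle: the common-neighbourhood hypothesis makes each vertex of $I$ contribute to monochromatic edges in a way that depends only on its colour, reducing the choice of $j$ to a one-line minimum-versus-average inequality.
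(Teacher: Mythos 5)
Your proposal is correct and follows essentially the same argument as the paper: both count the monochromatic edges between $I$ and $N(I)$ as a colour-weighted sum, choose $j\in c(I)$ minimising the number of $j$-coloured vertices in $N(I)$, and conclude via the same minimum-versus-weighted-sum inequality. Your explicit remark that the bipartite graph between $I$ and $N(I)$ is complete (so the edge count is exactly the product of class sizes) is a point the paper uses only implicitly, but the proofs are otherwise the same.
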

\begin{proof}
For each $i\in[h]$, we denote by $n_i$ the number of vertices in $I$ which receive colour $i$ by $c$.
Similarly, for every $i\in[h]$, we denote by $n'_i$ the number of vertices in $N(I)$ which receive colour $i$ by $c$.
Since $I$ is independent, no colouring can have any monochromatic edges between two vertices of $I$.
The number of monochromatic edges between $I$ and $N(I)$ is $\sum_{i\in [h]}n_in'_i$.
Let $j\in c(I)$ be such that $n'_{j}$ is minimum amongst all colours in $c(I)$.
After recolouring all vertices in $I$ with colour $j$, the number of monochromatic edges between $I$ and $N(I)$ is $\vert I\vert n'_{j}=\sum_{i\in [h]}n_in'_{j}\leq \sum_{i\in [h]}n_in'_i$.
This concludes the proof.
\end{proof}

\subsection{Algorithms}

\begin{theorem}
    For a fixed integer $h$, the decision problem $h$-\textsc{Monochromatic Edges} is solvable in polynomial time on cographs.
\end{theorem}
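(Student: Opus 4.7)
The plan is to design a dynamic programming algorithm on the cotree. By \cite{binarycotree} we may assume access to a binary cotree $T$ of $G$ with $O(|V(G)|)$ nodes. Because $h$ is fixed, we can afford a state that records the complete colour profile of any subcograph $T_p$.

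For each node $p$ of $T$ and each tuple $(n_1,\dots,n_h)\in[0..|V(T_p)|]^h$ with $n_1+\cdots+n_h=|V(T_p)|$, define $f_p(n_1,\dots,n_h)$ to be the minimum number of monochromatic edges of an $h$-colouring of $T_p$ in which colour $i$ appears exactly $n_i$ times (set $f_p=+\infty$ if no such colouring exists). The answer to the instance $(G,m)$ is then $\yes$ if and only if $\min f_s \le m$, where $s$ is the root of $T$. If $p$ is a leaf, $T_p=K_1$ and $f_p$ equals $0$ on the $h$ unit tuples and $+\infty$ elsewhere. For an interior node $p$ with children $q,r$, every feasible colouring of $T_p$ arises from independent feasible colourings of $T_q$ and $T_r$, so the recurrences are
\begin{align*}
f_p(n_1,\dots,n_h) &= \min_{\substack{a_i+b_i=n_i\\ a_i,b_i\ge 0}}\bigl(f_q(a_1,\dots,a_h)+f_r(b_1,\dots,b_h)\bigr) && \text{if $p$ is a $0$-node,}\\
f_p(n_1,\dots,n_h) &= \min_{\substack{a_i+b_i=n_i\\ a_i,b_i\ge 0}}\Bigl(f_q(a_1,\dots,a_h)+f_r(b_1,\dots,b_h)+\sum_{i=1}^h a_ib_i\Bigr) && \text{if $p$ is a $1$-node.}
\end{align*}
The first recurrence is correct because in $T_q+T_r$ there are no edges between $V(T_q)$ and $V(T_r)$, so the monochromatic edges of a colouring of $T_p$ are exactly those inherited from the two subcolourings. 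The second recurrence is correct because in $T_q\times T_r$ every pair of vertices from opposite sides is joined, contributing $a_ib_i$ monochromatic edges of colour $i$ on top of the monochromatic edges internal to each side.

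Correctness follows by a straightforward induction on the cotree using these identities, and the feasible answer is obtained by reading off a minimiser; standard backtracking through the DP table returns both the colouring and the corresponding set of monochromatic edges. For the running time, the number of states at each node is at most $\binom{|V(T_p)|+h-1}{h-1}=O(n^{h-1})$, and each state is computed by iterating over at most $O(n^{h-1})$ admissible splits, giving $O(n^{2(h-1)})$ work per node; summing over the $O(n)$ nodes of the cotree yields total time $O(n^{2h-1})$, which is polynomial for fixed $h$.

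The only subtle point is that the DP must track the \emph{exact} number of vertices of each colour rather than some coarser summary, because at a $1$-node the join contributes $\sum_i a_ib_i$ monochromatic edges, a quantity that genuinely depends on the whole colour profile of each child; this is what forces the state space of size $\Theta(n^{h-1})$ and explains why the algorithm is polynomial only for fixed $h$. Apart from this observation, everything reduces to the recurrences above.
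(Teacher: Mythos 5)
Your proposal is correct and follows essentially the same approach as the paper: a dynamic program over the binary cotree whose state at each node is the exact colour profile $(n_1,\dots,n_h)$, with the same two recurrences for $0$-nodes and $1$-nodes (the cross-term $\sum_i a_ib_i$ at join nodes) and a final minimisation over the root's profiles. Your running-time bound is even marginally tighter than the paper's, since you exploit the constraint that the profile entries sum to $\vert V(T_p)\vert$.
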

\begin{proof}

Let $G$ be a cograph with associated cotree $T$.
        For every $p\in V(T)$, we define a function $f^p$ which takes as input an $h$-tuple of non-negative integers $a^p = (a^p_1,\ldots,a^p_h)$ whose entries sum up to $\vert V(T_p)\vert$.
       
        
        If $p$ is a leaf, then $f^p(a^p)=0$ for any valid input.
        If $p$ is not a leaf, let $q$ and $r$ be the children of $p$.
        If $p$ is a 0-node, then 
        $$f^p(a^p)=\min_{\substack{a^q,a^r\in [0..n]^h\\a_1^q+\ldots+a_h^q=\vert T_q\vert\\a_1^r+\ldots+a_h^r=\vert T_r\vert\\a_i^q+a_i^r=a^p_i,\,i\in[h] }}\left(f^q(a^q)+f^r(a^r)\right).$$
        If $p$ is a 1-node, then 
        $$f^p(a^p)=\min_{\substack{a^q,a^r\in [0..n]^h\\a_1^q+\ldots+a_h^q=\vert T_q\vert\\a_1^r+\ldots+a_h^r=\vert T_r\vert\\a_i^q+a_i^r=a^p_i,\,i\in[h] }}\left(f^q(a^q)+f^r(a^r)+\sum_{i=1}^h a_i^qa_i^r\right).$$
        This defines the values of $f^p$ for every $p\in V(T)$.
        Observe that for every node $p\in V(T)$, there are at most $O(n^h)$ possible inputs $a^p$.
        To compute $f^p(h)$, we consider every pair of $h$-tuples $a^q,a^r\in[0..n]^h$, of which there are $O(n^{2h})$.
        Checking whether they sum to the correct values and computing the term given in the formula above takes constant time.
        So, we can compute the function $f^p$ in polynomial time.
        \\\\
        \textbf{Claim.}
        For every $p\in V(T)$ and every $h$-tuple $a^p = (a^p_1,\ldots,a^p_h)$ of non-negative integers with
        \begin{align}
        a^p_1+\ldots+a^p_h&=\vert T_p\vert,\label{ClaimCondition2}
        \end{align}
        the value of $f^p(a^p)$ is the minimum number of monochromatic edges of all $h$-colourings of $T_p$, in which colour $i$ appears $a^p_i$ times for every $i\in [h]$.
        \\
        \textit{Proof of the claim.}\\
        Observe first that the claim holds when $p$ is a leaf, since then there are no edges in $T_p$.
        If $p$ is not a leaf, let $q$ and $r$ be the children of $p$ and assume that the claim holds for $q$ and $r$.
        
        For an $h$-tuple $a^p = (a^p_1,\ldots,a^p_h)$ which satisfies (\ref{ClaimCondition2}), let $c_p$ be an $h$-colouring of $T_p$ which, for every $i\in[h]$, assigns colour $i$ to exactly $a^p_i$ vertices, and which minimizes the number of monochromatic edges amongst all such colourings.
        Let $m_p$ be the number of monochromatic edges in $c_p$.
        Let $c_q$ and $c_r$ be the $h$-colourings of $T_q$ and $T_r$, respectively, which we obtain by restricting $c_p$ to $T_q$ and $T_r$, respectively.
        For every $i\in [h]$, let $a_i^q$ be the number of vertices in $T_q$ which receive the colour $i$ from $c_q$.
        Define $a_i^r$ analogously for all $i\in[h]$.
        Clearly, $a^p_i=a_i^q+a_i^r$ for every $i\in[h]$ as well as $\sum_{i=1}^ha_i^q=\vert T_q\vert$ and $\sum_{i=1}^ha_i^r=\vert T_r\vert$.
        It follows from the definition of $f^p$ that $f^p(a^p) \leq f^q(a^q)+f^r(a^r)$ if $p$ is a 0-node and $f^p(a^p) \leq f^q(a^q)+f^r(a^r)+\sum_{i=1}^ha_i^qa_i^r$ if $p$ is a 1-node.
        Let $m_q$ be the number of monochromatic edges of $c_q$, and define $m_r$ analogously.
        Since the claim holds for $q$ and $r$, it follows that $f^q(a^q)\leq m_q$ and $f^r(a^r)\leq m_r$.
        If $p$ is a 0-node, then the number of monochromatic edges of $c_p$ is $m_p = m_q+m_r$ and thus, $f^p(a^p) \leq m_q+m_r = m_p$.
        If $p$ is a 1-node, then the number of monochromatic edges of $c_p$ is $m_p = m_q+m_r+\sum_{i=1}^h a_i^qa_i^r$ and thus, $f^p(a^p) \leq m_q+m_r+\sum_{i=1}^h a_i^qa_i^r = m_p$.
        
        Suppose for a contradiction that $f^p(a^p) < m_p$.
        Then there are $h$-tuples of non-negative integers $b^q= (b_1^q,\ldots,b_h^q)$ and $b^r = (b_1^r,\ldots,b_h^r)$ such that
        \begin{itemize}
            \item $b_1^q+\ldots+b_h^q=\vert T_q\vert$,
            \item $b_1^r+\ldots+b_h^r=\vert T_r\vert$,
            \item $b_i^q+b_i^r=a^p_i$ for all $i\in[h]$,
            \item $f^q(b^q)+f^r(b^r)=f^p(a^p)<m_q+m_r$ if $p$ is a 0-node and
            \item $f^q(b^q)+f^r(b^r)+\sum_{i=1}^h b_i^qb_i^r=f^p(a^p)<m_q+m_r+\sum_{i=1}^h a_i^qa_i^r$ if $p$ is a 1-node.
        \end{itemize}
        Since we assume that the claim holds for $q$ and $r$, it follows that there is an $h$-colouring $c'_q$ of $T_q$ such that the number of vertices in $T_q$ which receive the colour $i$ by $c'_q$ is $b_i^q$ for every $i\in[h]$ and whose number of monochromatic edges is $f^q(b^q)$ and analogously for $T_r$.
        Let us colour $T_p$ by colouring the vertices in $T_q$ according to $c'_q$ and the vertices in $T_r$ according to $c'_r$.
        This yields a well-defined $h$-colouring $c_p'$ of $T_p$ since $V(T_p)$ is a disjoint union of $V(T_q)$ and $V(T_r)$.
        The number of monochromatic edges of $c_p'$ is $f^q(b^q)+f^r(b^r)=f^p(a^p)$ if $p$ is a 0-node and $f^q(b^q)+f^r(b^r)+\sum_{i=1}^hb_i^qb_i^r=f^p(a^p)$ if $p$ is a 1-node.
        Since $f^p(a^p)< m_p$ it follows that $c_p'$ is an $h$\nobreakdash-colouring of $T_p$ for which the number of vertices which receive colour $i$ is $b_i^q+b_i^r=a^p_i$ for every $i\in[m]$ and which has less monochromatic edges than $c_p$, a contradiction.
        It follows that $f^p(a^p)$ is the number of monochromatic edges of $c_p$ and thus that the claim holds for $p$. 

        We have shown that for every node $p\in V(T)$ and every $h$-tuple $a^p$, we can compute in polynomial time the smallest number of monochromatic edges of all $h$\nobreakdash-colourings of $T_p$ whose $i$-th colour class has size $a_i^p$ for each $i\in[h]$.
        In particular, the value of $f^s(a^s)$, where $s$ is the root of $T$, is the minimum number of monochromatic edges of all $h$-colourings of $T$ whose $i$-th colour class has size $a_i^s$ for each $i\in[h]$.
        Since there are $O(n^h)$ possibilities how to fix the sizes of $h$ colour classes which partition $n$ vertices, we can determine the minimum number of monochromatic edges of $h$-colourings for each of them and then take the minimum of all of them.
        This can be done in polynomial time and shows thus the theorem.

        
\end{proof}

For the next theorem we will need the following definitions.
Let $\ell$ be a positive integer, $a= (a_1,\ldots,a_\ell),a' = (a_1',\ldots,a_\ell')$ two $\ell$-tuples of real numbers and $\lambda\leq\ell$ a positive integer.
We want to associate $\lambda$ entries of one tuple to $\lambda$ entries of the other tuple.

We say that a \emph{$\lambda$-matching $\mu$ of two $\ell$-tuples} is a $\lambda$-tuple $((i_1,i'_1), \dots, (i_\lambda,i'_\lambda)) \in ([\ell]\times [\ell])^\lambda$ such that for all $j,j' \in [\lambda]$, $j \neq j'$, we have that $i_j \neq i_{j'}$ and $i'_j \neq i'_{j'}$.
Intuitively, we can imagine that a $\lambda$-matching encodes the edges of a matching in a complete bipartite graph whose vertices correspond to the entries of two $\ell$-tuples, see Figure~\ref{fig-matching} for an example.
For any $\mu_j = (i_j, i'_j)$, $j \in [\lambda]$, we say that the index $i_j$ is matched with the index $i'_j$.
For the $\ell$-tuples $a$ and $a'$ we say that the entry $a_{i_j}$ is matched with the entry~$a'_{i'_j}$.
The \emph{value of a $\lambda$-matching $\mu$ applied to the $\ell$-tuples $a$ and $a'$} is denoted by $\val(\mu,a,a')$ and is defined as $\sum_{j=1}^\lambda a_{i_j}a'_{i'_j}$.

We now want to create a new tuple from $a$ and $a'$ and a $\lambda$-matching $\mu$.
A \emph{$\mu$-merge} of $a$ and $a'$ is a $(2\ell-\lambda)$-tuple whose $j$-th entry is $a_{i_j}+a'_{i'_j}$ for all $j\in[\lambda]$, the next $\ell-\lambda$ entries contain the unmatched entries of $a$, and the last $\ell-\lambda$ entries contain the unmatched entries of $a'$, see Figure~\ref{fig-matching} for an example.
We obtain a \emph{sorted $\mu$-merge} of $a$ and $a'$ if we sort the entries of any $\mu$-merge of $a$ and $a'$ in ascending order.
Observe that there might be several $\mu$\nobreakdash-merges of $a$ and $a'$, but there is only one sorted $\mu$-merge.
Given a $\lambda$-matching~$\mu$ and two $\ell$-tuples $a$ and $a'$, we denote the sorted $\mu$-merge of $a$ and $a'$ by $\merge(\mu,a,a')$.
We denote the $i$-th entry of $\merge(\mu, a, a')$ by $\merge(\mu, a, a', i)$, for every $i \in [2\ell-\lambda]$.

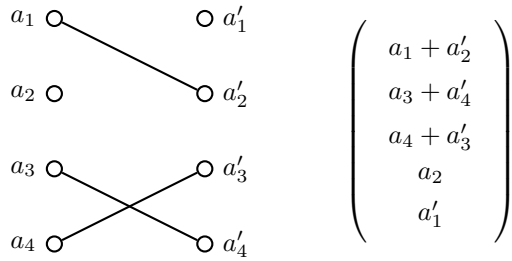
\begin{figure}
\centering
\begin{tikzpicture}

\node[vertex, label=left:$a_1$](a1) at (0,4){};
\node[vertex, label=left:$a_2$](a2) at (0,3){};
\node[vertex, label=left:$a_3$](a3) at (0,2){};
\node[vertex, label=left:$a_4$](a4) at (0,1){};

\node[vertex, label=right:$a'_1$](ap1) at (2,4){};
\node[vertex, label=right:$a'_2$](ap2) at (2,3){};
\node[vertex, label=right:$a'_3$](ap3) at (2,2){};
\node[vertex, label=right:$a'_4$](ap4) at (2,1){};

\draw[hedge](a1)--(ap2);
\draw[hedge](a3)--(ap4);
\draw[hedge](a4)--(ap3);

\begin{scope}[shift = {(5,2.5)}]


  \matrix [matrix of math nodes,left delimiter=(,right delimiter=)]
  {
    a_1 + a_2' \\
    a_3 + a_4' \\
    a_4 + a_3' \\
    a_2 \\
    a_1' \\
  };

\end{scope}

\end{tikzpicture}

\caption{\label{fig-matching}Left: The $3$-matching $\mu = ((1,2),(3,4),(4,3))$ visualized on the $4$-tuples $a = (a_1,\dots, a_4)$ and $a' = (a_1',\dots, a_4')$. Right: A corresponding $\mu$-merge.}
\end{figure}

\begin{property}\label{property1}
    Let $G$ be a cograph with corresponding cotree $T$.
    A colouring $c$ of $T$ is said to have \emph{Property~\ref{property1}} if for every 0-node $p$ of $T$ with children $q$ and $r$, the $i$-th largest colour class of $T_q$ has the same colour as the $i$-th largest colour class of $T_r$ for all $i \in [\chi(T_p)]$.
\end{property}

\begin{lemma} \label{lem-structcolouring}
Let $G$ be a cograph with associated cotree $T$. Let $d$ be a non-negative integer.
There is a $(\chi(G)-d)$-colouring $c$ of $G$ with the minimum possible number of monochromatic edges which satisfies Property~\ref{property1}.
\end{lemma}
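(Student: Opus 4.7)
The plan is to start from any $(\chi(G)-d)$-colouring $c$ of $G$ with the minimum number of monochromatic edges and gradually modify it, without ever increasing that number, into one that satisfies Property~\ref{property1}. I process the 0-nodes of $T$ in bottom-up order (children before parents); at a 0-node $p$ with children $q$ and $r$, I apply one permutation $\sigma_q$ of the $h := \chi(G)-d$ colour labels uniformly to every vertex of $T_q$ and another $\sigma_r$ uniformly to every vertex of $T_r$, chosen so that after relabelling, for every $i$, the $i$-th largest colour class of $T_q$ and the $i$-th largest colour class of $T_r$ carry the same label. This restores Property~\ref{property1} at $p$.

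Such a uniform relabelling cannot break Property~\ref{property1} at any 0-node that already satisfies it. A previously processed descendant of $p$ lies either entirely in $T_q$, in which case both subtrees of the descendant receive the same permutation $\sigma_q$ and their aligned colour classes remain aligned, or entirely in $T_r$ (symmetrically), so neither side is disturbed. Likewise, when an ancestor 0-node is later processed, the permutation applied there is uniform on all of $T_p$ and therefore preserves Property~\ref{property1} at $p$. Hence it suffices to verify that one single step does not increase the number of monochromatic edges.

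At the step performed at $p$, edges inside $T_q$ and inside $T_r$ are unchanged by relabelling, and there are no edges between $T_q$ and $T_r$ since $p$ is a 0-node. The only contribution that can change comes from edges between $T_p$ and the set $W$ of vertices outside $T_p$ having a neighbour in $T_p$; by the structure of the cotree, $W$ is completely joined to $T_p$. Let $a^q$, $a^r$, $b$ be the colour-count $h$-tuples of $T_q$, $T_r$, $W$ in the current colouring, and let $\tilde a^q$, $\tilde a^r$ denote $a^q$, $a^r$ sorted in descending order. I choose the common relabelling $\pi$ so that $(b_{\pi(k)})_k$ is nondecreasing and let $\sigma_q$, $\sigma_r$ assign colour $\pi(k)$ to the $k$-th largest colour class of $T_q$ and $T_r$ respectively. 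The new $T_p$-to-$W$ contribution is
$$\sum_{k=1}^{h} \tilde a^q_k\, b_{\pi(k)} \;+\; \sum_{k=1}^{h} \tilde a^r_k\, b_{\pi(k)},$$
and by the rearrangement inequality each summand is at most the corresponding old contribution $\sum_j a^q_j b_j$ or $\sum_j a^r_j b_j$.

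The hard part is reconciling the two competing requirements at each 0-node: enforcing the sorted alignment prescribed by Property~\ref{property1}, while at the same time not inflating the contribution of the complete bipartite join between $T_p$ and its external neighbourhood $W$. The freedom left in the choice of the common relabelling $\pi$ is precisely what the rearrangement inequality needs to handle both goals simultaneously.
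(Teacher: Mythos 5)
Your proof is correct and follows essentially the same strategy as the paper's: process the 0-nodes bottom-up, observe that under a colour relabelling of $T_q$ or $T_r$ the only monochromatic edges that can change are those in the complete join between $T_p$ and its external neighbourhood, and use an exchange argument on the sorted colour classes to achieve the alignment of Property~\ref{property1} without increasing that contribution, noting that label permutations preserve the property at already-fixed descendants. The only real difference is in packaging: you perform the alignment in one shot by relabelling both children to a common target ordering and invoking the rearrangement inequality against the external colour counts, whereas the paper iterates pairwise colour-class swaps within a single child keyed to the first misaligned index, which amounts to the transposition-by-transposition proof of the same inequality.
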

\begin{proof}
    Let $p\in V(T)$ be a 0-node with children $q$ and $r$. 
    Suppose that there is an $i\in[\chi(T_p)]$ such that the $i$-th largest colour class of $T_q$ does not have the same colour as the $i$-th largest colour class of $T_r$ and assume that $i$ is the smallest such number.
    Note that the $i$-th colour classes of both $T_q$ and $T_r$ cannot be empty since an empty colour class has the same colour as every other colour class.
    By construction of the cotree, all vertices in $T_p$ have the same neighbourhood in $T_{\overline{p}}$.
    Denote by $n_q$ and $n_r$ the number of vertices in $T_{\overline{p}}$ which have the same colour as the vertices in the $i$-th largest colour class of $T_q$ and $T_r$, respectively, and which are adjacent to the vertices in $T_p$.
    Assume, without loss of generality, that $n_q\geq n_r$.
    Denote by $a_q$ and $a_r$ the number of vertices of the $i$-th largest colour class of $T_q$ and $T_r$, respectively.
    Denote by $a'_q$ the number of vertices in $T_q$ which have the colour of the $i$-th largest colour class of $T_r$ and denote by $a'_r$ the number of vertices which have the colour of the $i$-th largest colour class of $T_q$.
    Observe that $a'_q\leq a_q$ and $a'_r\leq a_r$ since we assumed that $i$ is the smallest index for which the $i$-th largest colour classes of $T_q$ and $T_r$ do not have the same colour.
    There are $n_q(a_q+a'_r)+n_r(a_r+a'_q)$ monochromatic edges between $T_p$ and $T_{\overline{p}}$ whose endpoints have the colours of the $i$-th colour classes of $T_q$ or $T_r$.
    Recolour the vertices in the $i$-th largest colour class of $T_q$ with the colour of the $i$-th largest colour class of $T_r$ and colour the vertices in $T_q$ which have the colour of the $i$-th largest colour class in $T_r$ with the colour of the $i$-th largest colour class of $T_q$.
    Observe that we only exchanged the colours of two colour classes in $T_q$, so we can only have changed the number of monochromatic edges between $T_p$ and $T_{\overline{p}}$ which have the colours of the $i$-th largest colour classes of $T_p$ and $T_q$.
    After recolouring, this number is now $n_r(a_q+a_r)+n_q(a'_q+a'_r)$.
    The difference of the number of monochromatic edges is $n_q(a_q+a'_r)+n_r(a_r+a'_q) - n_r(a_q+a_r)+n_q(a'_q+a'_r) = (n_r-n_q)(a'_q-a_q)$, which is the product of two non-positive numbers and thus non-negative.
    This shows that the recolouring did not increase the number of monochromatic edges.
    Repeating this step for each $i\in[\chi(T_p)]$ as above yields a colouring which satisfies the condition above.

    
    Observe that if the condition above holds for any descendant $q$ of $p$ in the original colouring, then it still holds after the recolouring, since any two vertices in $T_q$ having the same colour in the original colouring will still have the same colour after the recolouring step.
    We can thus apply the recolouring procedure starting at the leaves and working our way up to the root, ending up with a colouring of the graph for which the desired condition holds for every node of the cotree.

\end{proof}

\begin{lemma}\label{1node}
    Let $G$ be a cograph with associated cotree $T$ whose root $p$ is a 1-node with children $q$ and $r$.
    Let $a^p=(a^p_1,\ldots,a^p_\ell)$ be an $\ell$-tuple of non-negative integers and let $\lambda,\Delta$ be non-negative integers.
    There is a $(\chi(T_p)-\Delta)$-colouring of $T_p$ with Property~\ref{property1} whose $i$-th smallest colour class has at most $a^p_i$ vertices for every $i\in[\ell]$, which has exactly $\lambda$ colours which appear in both $T_q$ and $T_r$, which has the minimum number of monochromatic edges amongst all such colourings and there is no $j>\ell+\lambda$ such that the colour of the $j$-th smallest colour class of $T_q$ (or $T_r$) appears in $T_r$ (or $T_q$, respectively).
\end{lemma}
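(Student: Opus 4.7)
The plan is an exchange argument. Among all $(\chi(T_p)-\Delta)$-colourings of $T_p$ that satisfy Property~\ref{property1}, the size bound $n_i\leq a^p_i$ for $i\in[\ell]$, and have exactly $\lambda$ shared colours between $T_q$ and $T_r$, pick a colouring $c$ minimizing the number of monochromatic edges; among such minimizers, minimize also $\sigma_q(c)+\sigma_r(c)$, where $\sigma_q(c)$ (resp.\ $\sigma_r(c)$) denotes the largest $j$ such that the colour of the $j$-th smallest colour class of $T_q$ (resp.\ $T_r$) appears in $T_r$ (resp.\ $T_q$), and $0$ if no such $j$ exists. The aim is to show that $\sigma_q(c)\leq\ell+\lambda$ and $\sigma_r(c)\leq\ell+\lambda$, which is precisely the final clause of the lemma.

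Suppose for contradiction, and by symmetry, that $\sigma_q(c)>\ell+\lambda$. Since only $\lambda$ colours of $T_q$ are shared with $T_r$ and one of them sits at rank $\sigma_q(c)>\ell+\lambda$, the first $\ell+\lambda$ ranks of $T_q$ host at most $\lambda-1$ shared colours, so at least $\ell+1\geq 1$ of them carry colours absent from $T_r$. Pick such a rank $k\leq\ell+\lambda$ and let $y$ be its colour; let $x$ be the colour of the $\sigma_q(c)$-th smallest class of $T_q$, which does appear in $T_r$. Define $c'$ from $c$ by swapping the two labels $x$ and $y$ throughout $T_q$ and leaving $T_r$ unchanged.

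The routine verifications are as follows. Property~\ref{property1} is preserved because the relabelling is uniform on $V(T_q)$, so at every 0-node inside $T_q$ the colours at each rank in its two children are permuted identically and the rank-by-rank colour agreement is maintained. The multisets of $T_q$-class sizes and of $T_r$-class sizes are unchanged (only labels are permuted), and in particular the multiset of colours used in $T_q$ is unchanged, so the set of shared colours is still of cardinality $\lambda$. The number of monochromatic edges does not increase: the count inside $T_q$ is preserved by the renaming, the count inside $T_r$ is untouched, and the only cross-edges whose count changes are those of colour $x$, dropping from $n^q_{\sigma_q(c)}\cdot n^r_x$ to $n^q_k\cdot n^r_x$ with $n^q_k\leq n^q_{\sigma_q(c)}$ by the rank ordering. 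In $c'$ the shared colour $x$ has moved to rank $k\leq\ell+\lambda$ while $y$ now occupies rank $\sigma_q(c)$ but remains unshared, so $\sigma_q(c')<\sigma_q(c)$ and $\sigma_r(c')=\sigma_r(c)$, contradicting the choice of $c$.

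The main obstacle is that this swap can shift the sorted multiset of $T_p$-class sizes, since the $T_p$-count of colour $x$ decreases by $n^q_{\sigma_q(c)}-n^q_k$ while that of $y$ increases by the same amount, so for a careless choice of $k$ the bound $a^p_i$ may be breached at some position $i\in[\ell]$. The resolution exploits the flexibility in choosing $k$: there are at least $\ell+1$ admissible unshared candidates at rank $\leq\ell+\lambda$ in $T_q$, while only $\ell$ positions of the sorted $T_p$-multiset are constrained, so a pigeonhole-style argument combined with a case analysis on where the modified entries $n^q_k+n^r_x$ and $n^q_{\sigma_q(c)}$ land relative to the old positions of $n^q_k$ and $n^q_{\sigma_q(c)}+n^r_x$ in the sorted $T_p$-multiset yields at least one $k$ for which the updated $T_p$-multiset still satisfies the bound, completing the contradiction.
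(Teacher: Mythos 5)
Your proposal is correct and is essentially the paper's own proof: the same exchange argument that swaps, inside $T_q$, the colour of a shared class at rank $j>\ell+\lambda$ with the colour of an unshared class at some rank $k\leq\ell+\lambda$, together with the same verifications (Property~\ref{property1}, the count of $\lambda$ shared colours, and the non-increase of monochromatic edges via $n^q_k\leq n^q_j$). The one place where you leave a sketch --- the ``case analysis'' for preserving the bounds $a^p_i$ --- is resolved in the paper exactly by the pigeonhole you already state: among the at least $\ell+1$ unshared ranks in $[\ell+\lambda]$ of $T_q$, at least one carries a colour whose class in $T_p$ is \emph{not} among the $\ell$ smallest, and swapping with that one only increases a $T_p$-class outside the bottom $\ell$ while decreasing another, so the first $\ell$ order statistics of the $T_p$-class sizes cannot increase and no further case analysis is needed.
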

\begin{proof}
    Let $c_p$ be a $(\chi(T_p)-\Delta)$-colouring of $T_p$ with Property~\ref{property1} such that its $i$-th smallest colour class has at most $a^p_i$ vertices for every $i\in[\ell]$, the number of colours which appear in both $T_q$ and $T_r$ is $\lambda$ and its number of monochromatic edges amongst all such colourings is minimal.
    If there is no $j>\ell+\lambda$ such that the $j$-th smallest colour class of $T_q$ shares its colour with some vertices in $T_r$, or vice versa, we are done.
    If not, suppose, without loss of generality, that there are $j>\ell+\lambda$ and $j'$ such that the $j$-th smallest colour class of $T_q$ and the $j'$-th smallest colour class of $T_r$ have the same colour.
    Besides the $j$-th smallest colour class, there are $\lambda-1$ other colour classes in $T_q$ whose colour also appears in $T_r$.
    Since $j\geq\ell+\lambda+1$, there is an $i\in[\ell+\lambda]$ such that the $i$-th smallest colour class in $T_q$ has a colour which does not appear in $T_r$ and which is not one of the $\ell$ smallest colour classes of $T_p$.
    Recolour the vertices of the $i$-th and the $j$-th smallest colour classes of $c_q$ by exchanging their colours.
    This does not change the number of monochromatic edges with both ends in $T_q$.
    The number of monochromatic edges between $T_q$ and $T_r$ cannot increase since the $i$-th smallest colour class does not contain more vertices than the $j$-th smallest one.
    It follows that the new colouring did not increase the number of monochromatic edges or the number of colours used and it did not change the sizes of the $\ell$ smallest colour classes.
    It is also clear that the new colouring still has Property~\ref{property1}.
    Repeating this process for every $j$ as above yields a colouring as desired.
\end{proof}


\begin{theorem}\label{bigtheorem}
For a fixed integer $d$, $(\chi(G)-d)$-\textsc{Monochromatic Edges} is solvable in polynomial time on cographs.
\end{theorem}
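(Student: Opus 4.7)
The plan is to generalize the dynamic programming approach from the proof of the previous theorem, where $h$ was fixed, to the present setting in which the number of colours $h=\chi(G)-d$ may grow with $n$ but the gap $d$ is fixed. A naive table of size $n^{\chi(G)-d}$ is no longer polynomial, so the key idea is to exploit Lemmas~\ref{lem-structcolouring} and~\ref{1node} to restrict attention to optimal colourings of a special form in which only a bounded (in $d$) number of small colour classes per cotree node are relevant. Concretely, I would compute the cotree $T$ of $G$ and, for each node $p\in V(T)$, define a DP table indexed by $\Delta\in[0..d]$ and an $L$-tuple $a=(a_1,\ldots,a_L)\in[0..n]^L$ where $L$ is a constant depending only on $d$. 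The entry $g^p(\Delta,a)$ records the minimum number of monochromatic edges of any $(\chi(T_p)-\Delta)$-colouring of $T_p$ satisfying Property~\ref{property1} and the structural conclusion of Lemma~\ref{1node}, whose $L$ smallest colour classes have the sizes prescribed by $a$.

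The recursions closely follow those for the previous theorem but are driven by the structural lemmas. At a $0$-node $p$ with children $q,r$, Property~\ref{property1} allows me to combine children's tables by pairing the $i$-th largest class of $T_q$ with the $i$-th largest of $T_r$, adding no monochromatic edges. At a $1$-node I additionally enumerate a number $\lambda\in[0..d]$ of shared colours together with a $\lambda$-matching $\mu$ between the small classes of the two children, paying $\val(\mu,a^q,a^r)$ monochromatic edges and obtaining the parent profile as the sorted $\mu$-merge of $a^q$ and $a^r$, truncated to its first $L$ entries. The answer to $(\chi(G)-d)$-\textsc{Monochromatic Edges} is then $\min_a g^s(d,a)$ at the root $s$ of $T$, which is compared against~$m$.

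The main obstacle is to pin down $L$ and to verify that the DP transitions only need information actually stored in the children's tables, i.e.\ that we never need classes beyond the $L$-th smallest. The key observation is that the identity $\Delta_p=\Delta_q+\Delta_r+\lambda_p$ at each $1$-node forces $\sum\lambda_p\leq\Delta_{\mathrm{root}}=d$ along any root-to-leaf path, so the $\lambda_p$'s telescope against the root budget. Applying Lemma~\ref{1node} iteratively from the root downwards then places the shared colour classes at any $1$-node $p$ among the $\ell_p+\lambda_p=O(d)$ smallest classes of each child, where $\ell_p$ itself is bounded by the same telescoping argument. Choosing $L=O(d)$ therefore captures every colour class that could participate in a sharing operation at some ancestor $1$-node. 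Each DP entry is computable in time $O((n+1)^{2L}\cdot\mathrm{poly}(d))$ by enumerating pairs of children states and $\lambda$-matchings on $L$-tuples, and there are $O(d\cdot (n+1)^L)$ entries per node, giving an overall polynomial running time for any fixed $d$.
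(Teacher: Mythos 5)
Your proposal is correct and follows essentially the same route as the paper: a cotree dynamic program whose states record $\Delta\in[0..d]$ together with the sizes of only the $O(d)$ smallest colour classes, justified by Property~\ref{property1} and Lemma~\ref{1node}, with $\lambda$-matchings and sorted $\mu$-merges handling the shared colours at $1$-nodes and the identity $\Delta=\Delta_q+\Delta_r+\lambda$ telescoping the budget. The only detail you gloss over is the index offset at $0$-nodes when $\chi(T_q)\neq\chi(T_r)$ (the paper's three-case analysis with $\delta=\chi(T_q)-\chi(T_r)$), but this is a mechanical consequence of the largest-class pairing you already invoke.
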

\begin{proof}

        Let $G$ be a cograph with associated cotree $T$ and let $d\leq\chi(G)$ be a fixed integer.
        For every $\ell\in [0..d]$ and every node $p\in V(T)$ we define a function $f_\ell^p$ which takes as input an $\ell$-tuple of non-negative integers $a^p = (a^p_1,\ldots,a^p_\ell)$ with $a^p_i\leq a^p_j$ for every $i\leq j$ and a non-negative integer $\Delta$ with $\Delta\leq d-\ell$.

        
        If $p$ is a leaf, then $f_\ell^p$ will output 0 for every $\ell$ and every input.
        If $p$ is not a leaf, let $q$ and $r$ be the children of $p$.
        Let $\ell$ and $\Delta$ be non-negative integers with $\ell + \Delta \leq d$ and let $a^p = (a^p_1,\ldots,a^p_\ell)$ be an $\ell$-tuple as above.
        
        If $p$ is a 0-node, then assume, without loss of generality, that $\chi(T_q)\geq\chi(T_r)$ and let $\delta=\chi(T_q)-\chi(T_r)$.
        We distinguish the following three cases:

        \textbf{Case 1:} If $\Delta\geq\delta$ we set
          $$f_\ell^p(a^p,\Delta)=\min_{\substack{
        a^q,a^r\in[0..n]^{\ell}\\
        a_i^q+a_i^r\leq a^p_i, i\in[\ell]}}f_\ell^q(a^q,\Delta)+f_{\ell}^r(a^r,\Delta-\delta).$$

           \textbf{Case 2:} If $\Delta<\delta$ and $\Delta+\ell\geq\delta$ we set
          $$f_\ell^p(a^p,\Delta)=\min_{\substack{
        a^q, a^r\in[0..n]^{\ell-\delta+\Delta}\\
        a_{\delta-\Delta+i}^q+a_i^r\leq a^p_{\delta-\Delta+i}, i\in[\ell-\delta+\Delta]}}f_\ell^q(a^p_1,\ldots,a^p_{\delta-\Delta},a^q,\Delta)+f_{\ell-\delta+\Delta}^r(a^r,0).$$

        \textbf{Case 3:} If $\Delta<\delta$ and $\Delta+\ell<\delta$ we set
          $$f_\ell^p(a^p,\Delta)=f_\ell^q(a^p,\Delta).$$

        If $p$ is a 1-node, then 
        $$f_\ell^p(a^p,\Delta)=\min_{\substack{
        \Delta_q,\Delta_r,\lambda\geq 0\\
        \Delta_q+\Delta_r+\lambda=\Delta\\
        a^q, a^r\in[0..n]^{\ell+\lambda}\\
        \mu\textrm{ $\lambda$-matching of }(\ell+\lambda)\textrm{-tuples}\\
        \merge(\mu,a^q,a^r,i)\leq a^p_i,i\in[\ell]}}
        f_{\ell+\lambda}^q(a^q,\Delta_q)+f_{\ell+\lambda}^r(a^r,\Delta_r)+ \val(\mu,a^q,a^r).$$

        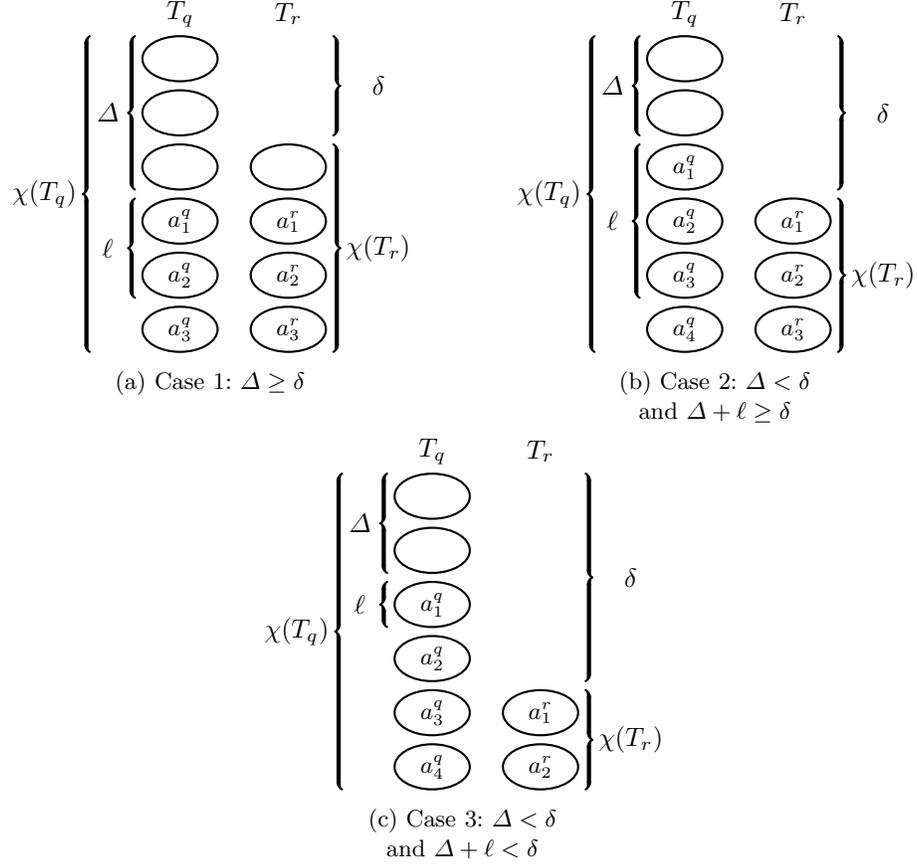
\begin{figure}[!ht]
        \centering
        \begin{subfigure}[t]{0.45\textwidth}
        \centering
        \begin{tikzpicture}
            \tikzstyle{class}=[thick,ellipse, minimum width=10mm, minimum height = 6mm, fill=white, draw=black, inner sep = 0pt]
            \tikzstyle{br} = [decorate, ultra thick, decoration = {calligraphic brace}]

        \begin{scope}[scale = 1.2]
            \footnotesize

            \node[class](a) at (0,0){$a^q_3$};
            \node[class](a) at (0,0.6){$a^q_2$};
            \node[class](a) at (0,1.2){$a^q_1$};
            \node[class](a) at (0,1.8){};
            \node[class](a) at (0,2.4){};
            \node[class](a) at (0,3){};

            \node[class](a) at (1.2,0){$a^r_3$};
            \node[class](a) at (1.2,0.6){$a^r_2$};
            \node[class](a) at (1.2,1.2){$a^r_1$};
            \node[class](a) at (1.2,1.8){};

            \normalsize

            \draw [br] (1.7,3.25) --  (1.7,2.15);
            \node[](delta) at (2.2, 2.7){$\delta$};

            \draw [br] (-0.5,1.55) --  (-0.5,3.25);
            \node[](Delta) at (-0.8, 2.4){$\Delta$};

            \draw [br] (-0.5,0.35) --  (-0.5,1.45);
            \node[](l) at (-0.8, 0.9){$\ell$};

            \draw [br] (-1,-0.25) --  (-1,3.25);
            \node[](xtq) at (-1.5, 1.5){$\chi(T_q)$};

            \draw [br] (1.7,2.05) --  (1.7,-0.25);
            \node[](xtr) at (2.2, 0.9){$\chi(T_r)$};

            \node[](tq) at (0, 3.5){$T_q$};
            \node[](tr) at (1.2, 3.5){$T_r$};
            \end{scope}
            
        \end{tikzpicture}
        \centering \caption{Case 1: $\Delta \geq \delta$}
        \end{subfigure}
        \hfill
        \begin{subfigure}[t]{0.45\textwidth}
        \begin{tikzpicture}
            \tikzstyle{class}=[thick,ellipse, minimum width=10mm, minimum height = 6mm, fill=white, draw=black, inner sep = 0pt]
            \tikzstyle{br} = [decorate, ultra thick, decoration = {calligraphic brace}]

        \begin{scope}[scale = 1.2]
            \footnotesize
            \node[class](a) at (0,0){$a^q_4$};
            \node[class](a) at (0,0.6){$a^q_3$};
            \node[class](a) at (0,1.2){$a^q_2$};
            \node[class](a) at (0,1.8){$a^q_1$};
            \node[class](a) at (0,2.4){};
            \node[class](a) at (0,3){};

            \node[class](a) at (1.2,0){$a^r_3$};
            \node[class](a) at (1.2,0.6){$a^r_2$};
            \node[class](a) at (1.2,1.2){$a^r_1$};
            \normalsize

            \draw [br] (1.7,3.25) --  (1.7,1.55);
            \node[](delta) at (2.2, 2.4){$\delta$};

            \draw [br] (-0.5,2.15) --  (-0.5,3.25);
            \node[](Delta) at (-0.8, 2.7){$\Delta$};

            \draw [br] (-0.5,0.35) --  (-0.5,2.05);
            \node[](l) at (-0.8, 1.2){$\ell$};

            \draw [br] (-1,-0.25) --  (-1,3.25);
            \node[](xtq) at (-1.5, 1.5){$\chi(T_q)$};

            \draw [br] (1.7,1.45) --  (1.7,-0.25);
            \node[](xtr) at (2.2, 0.6){$\chi(T_r)$};

            \node[](tq) at (0, 3.5){$T_q$};
            \node[](tr) at (1.2, 3.5){$T_r$};
        \end{scope}
            
        \end{tikzpicture}
        \caption{Case 2: $\Delta < \delta$ \\ and $\Delta + \ell \geq \delta$}
        \end{subfigure}
        \hfill
        \begin{subfigure}[t]{0.45\textwidth}
        
        \begin{tikzpicture}
            \tikzstyle{class}=[thick,ellipse, minimum width=10mm, minimum height = 6mm, fill=white, draw=black, inner sep = 0pt]
            \tikzstyle{br} = [decorate, ultra thick, decoration = {calligraphic brace}]

        \begin{scope}[scale = 1.2]
                \footnotesize
            \node[class](a) at (0,0){$a^q_4$};
            \node[class](a) at (0,0.6){$a^q_3$};
            \node[class](a) at (0,1.2){$a^q_2$};
            \node[class](a) at (0,1.8){$a^q_1$};
            \node[class](a) at (0,2.4){};
            \node[class](a) at (0,3){};

            \node[class](a) at (1.2,0){$a^r_2$};
            \node[class](a) at (1.2,0.6){$a^r_1$};

            \normalsize
            \draw [br] (1.7,3.25) --  (1.7,0.95);
            \node[](delta) at (2.2, 2.1){$\delta$};

            \draw [br] (-0.5,2.15) --  (-0.5,3.25);
            \node[](Delta) at (-0.8, 2.7){$\Delta$};

            \draw [br] (-0.5,1.55) --  (-0.5,2.05);
            \node[](l) at (-0.8, 1.8){$\ell$};

            \draw [br] (-1,-0.25) --  (-1,3.25);
            \node[](xtq) at (-1.5, 1.5){$\chi(T_q)$};

            \draw [br] (1.7,0.85) --  (1.7,-0.25);
            \node[](xtr) at (2.2, 0.3){$\chi(T_r)$};

            \node[](tq) at (0, 3.5){$T_q$};
            \node[](tr) at (1.2, 3.5){$T_r$};
        \end{scope}
            
        \end{tikzpicture}
        \caption{Case 3: $\Delta < \delta$ \\ and $\Delta + \ell < \delta$}
        \end{subfigure}
        
        \caption{\label{fig:cases}Illustration of the relation between $\delta, \Delta$ and $\ell$ in the three cases we consider in the proof.
        We represent a $(\chi(T_p)-\Delta)$-colouring with Property~\ref{property1} as a $\chi(T_p)$-colouring with Property~\ref{property1} whose smallest $\Delta$ colour classes are empty.
        Each ellipse represents a colour class in $T_q$ or $T_r$ and they are sorted ascendingly in size from top to bottom.
        By Property~\ref{property1}, two horizontally adjacent ellipses correspond to the same colour class in $T_p$.
        }
        \end{figure}

        This defines the values of $f_\ell^p$ for every $\ell\in[0..d]$ and $p\in V(T)$.
        Observe that for every $f_\ell^p$ there are at most $(n+1)^\ell (d-\ell+1)$ possible inputs.
        In order to compute the value of $f_\ell^p$ for one of them, we consider at most $(n+1)^{2(\ell+\Delta)}=O(n^{2d})$ possible pairs of tuples $a^q$ and $a^r$ if $p$ is a 0-node and $\Delta^3n^{2(\ell+\lambda)}\ell^{2\lambda}=O(n^{2d})$ choices for $\Delta_q, \Delta_r, \lambda, a^q, a^r$ and $\mu$ if $p$ is a 1-node.
        Thus, we can compute the function $f_\ell^p$ in polynomial time.

        \textbf{Claim.} For every $\ell\in[d], p\in V(T)$ and every valid input $(a^p, \Delta)$ the function $f_\ell^p(a^p, \Delta)$ returns the smallest number of monochromatic edges of all $(\chi(T_p)-\Delta)$-colourings of $T_p$ whose $i$-th smallest colour class has size at most $a^p_i$ for all $i \in [\ell]$ and which have Property~\ref{property1}.
        
        \textit{Proof of the Claim.}

        Let $p\in V(T)$ and let $\ell$ and $\Delta$ be non-negative integers with $\ell+\Delta\leq d$ and $a^p=(a^p_1,\ldots,a^p_\ell)\in [0..n]^\ell$.

        Observe first that the value of $f^p_\ell$ is correct when $p$ is a leaf since then there are no edges in $T_p$.
        If $p$ is not a leaf let $q$ and $r$ be the children of $p$ and assume that the claim holds for $q$ and $r$.

        Let $c_p$ be a $(\chi(T_p)-\Delta)$-colouring of $T_p$ which satisfies Property~\ref{property1} and whose $i$-th smallest colour class has at most $a^p_i$ vertices, for every $i \in [\ell]$, and which has the smallest number of monochromatic edges amongst all such colourings.
        We need to show that the number of monochromatic edges of $c_p$, which we denote by $m_p$, is identical to the value of $f_\ell^p(a^p,\Delta)$. 
        Let $c_q$ and $c_r$ be the colourings which we obtain by restricting $c_p$ to $T_q$ and $T_r$, respectively, and let $\Delta_q$ be an integer such that $\chi(T_q)-\Delta_q$ is the exact number of colours we use in $c_q$.
        We define $\Delta_r$ analogously.
        For every $i\in [\ell]$, denote by $a_i^q$ and $a_i^r$ the number of vertices of the $i$-th smallest colour class of $c_q$ and $c_r$, respectively.
        Observe that the $i$-th smallest colour classes of $c_q$ and $c_r$ do not necessarily have the same colour.
        Let $m_q$ and $m_r$ be the numbers of monochromatic edges of $c_q$ and $c_r$, respectively.
        
        Assume first that $p$ is a 0-node, set $\delta=\chi(T_q)-\chi(T_r)$ and assume without loss of generality that $\delta\geq 0$.
        Recall that $\chi(T_p) = \chi(T_q)$ and $m_q + m_r = m_p$, since $p$ is a 0-node.
        It follows from Property~\ref{property1} that for every $i>\chi(T_r)$ the $i$-th largest colour class of $c_p$ is entirely contained in $T_q$.
         To prove the claim we distinguish the following three cases, see Figure~\ref{fig:cases} for an illustration.
        
        \textbf{Case 1:} $\Delta\geq\delta$\\
        We first show that $f_\ell^p(a^p,\Delta)\leq m_p$.
        The colouring $c_p$ uses $\chi(T_q)-\Delta$ colours on $T_p$.
        We have that $\chi(T_q)-\Delta=\chi(T_r)-(\Delta-\delta)\leq\chi(T_r)$.
        It follows that $c_q$ is a $(\chi(T_q)-\Delta)$-colouring of $T_q$ and $c_r$ is a $(\chi(T_r)-(\Delta-\delta))$-colouring of $T_r$.
        By Property~\ref{property1} we know that the $i$-th largest colour class of $c_p$ on $T_p$ consists of the $i$-th largest colour class of $c_q$ and the $i$-th largest colour class of $c_r$ for every $i \in [\chi(T_q)-\Delta]$.
        This implies in particular that $a_i^p\geq a_i^q+a_i^r$ for every $i \in [\ell]$.
        Since $c_q$ is a $(\chi(T_q)-\Delta)$-colouring of $T_q$ whose $i$-th smallest colour class contains $a_i^q$ vertices and since the claim holds for $q$, it follows that $f_\ell^q(a^q,\Delta)\leq m_q$ and analogously $f_\ell^r(a^r,\Delta-\delta)\leq m_r$.
        Hence $f_\ell^p(a^p,\Delta) \leq f_\ell^q(a^q,\Delta) + f_\ell^r(a^r,\Delta-\delta)\leq m_q + m_r = m_p$.

        It remains to show that $f_\ell^p(a^p,\Delta)= m_p$.
        We suppose for a contradiction that there are $a^q, a^r\in[0..n]^\ell$ such that $a_i^q+a_i^r\leq a_i^p$ for all $i\in[\ell]$ and $f_\ell^p(a^p,\Delta) =f_\ell^q(a^q,\Delta)+f_\ell^r(a^r,\Delta-\delta)< m_p$.
        Since the claim holds for $q$ and $r$, it follows that there is a $(\chi(T_q)-\Delta)$-colouring $c'_q$ of $T_q$ and a $(\chi(T_r)-(\Delta-\delta))$-colouring $c'_r$ of $T_r$, both with Property~\ref{property1}, such that the $i$-th smallest colour class of $c'_q$ ($c'_r$, respectively) contains at most $a_i^q$ vertices ($a_i^r$ vertices, respectively) for each $i\in[\ell]$ and that the sum of the number of monochromatic edges in $c_q'$ and $c_r'$ is strictly less than $m_p$.
        Construct a $(\chi(T_p)-\Delta)$-colouring $c_p'$ of $T_p$ with Property~\ref{property1} such that the $i$-th largest colour class of $c_p'$ contains exactly the vertices of the $i$-th largest colour class of $c'_q$ and the vertices of the $i$-th largest colour class of $c_r'$ for every $i\in[\chi(T_q)-\Delta]$.
        It follows that the size of the $i$-th smallest colour class of $c_p'$ is at most $a_i^p$ for every $i\in[\ell]$.
        Since $p$ is a 0-node, the number of monochromatic edges of $c_p'$ is exactly the sum of the numbers of monochromatic edges of $c'_q$ and $c'_r$ and thus less than $m_p$.
        By construction, $c_p'$ has Property~\ref{property1} and so $c_p'$ is a $(\chi(T_p)-\Delta)$-colouring with Property~\ref{property1} of $T_p$ whose $i$-th smallest colour class has size at most $a_i^p$ for every $i\in[\ell]$ and which has less monochromatic edges than $c_p$, a contradiction to the choice of $c_p$.

        \textbf{Case 2:} $\Delta<\delta$ and $\Delta+\ell\geq\delta$\\
        Again we first show that $f_\ell^p(a^p,\Delta)\leq m_p$.
        Since $c_p$ uses $\chi(T_q)-\Delta$ colours, it follows from Property~\ref{property1} that the smallest $\chi(T_q)-\Delta-\chi(T_r)=\delta-\Delta$ colour classes of $c_p$ are entirely contained in $T_q$ and thus $a_i^q\leq a_i^p$ for each $i\in[\delta-\Delta]$.
        Further, $c_r$ uses $\chi(T_r)$ colours.
        For every $i\in\set{\delta-\Delta+1,\ldots,\ell}$, the $i$-th smallest colour class of $c$ consists of the $i$-th smallest colour class of $c_q$ and the $(i-\delta+\Delta)$-th smallest colour class of $c_r$ and thus $a_{i}^p\geq a_i^q+a^r_{i-\delta+\Delta}$.
        Hence, $f_\ell^q(a^p_1,\ldots,a^p_{\delta-\Delta},a_{\delta-\Delta+1}^q,\ldots,a^q_\ell,\Delta)\leq f_\ell^q(a^q,\Delta) \leq m_q$ and $f^r_{\ell-\delta+\Delta}(a^r,0)\leq m_r$.
        It follows that $f_\ell^p(a^p,\Delta) \leq f_\ell^q(a_1^p,\ldots,a^p_{\delta-\Delta},a_{\delta-\Delta+1}^q,\ldots,a^q_\ell,\Delta) + f^r_{\ell-\delta+\Delta}(a^r,0) \leq m_q + m_r = m_p$.

        It remains to show that $f_\ell^p(a^p,\Delta)= m_p$.
        Again, we suppose for a contradiction that there are $a^q, a^r\in[0..n]^{\ell-\delta+\Delta}$ such that $a_{i+\delta-\Delta}^q+a_i^r\leq a^p_{i+\delta-\Delta}$ for all $i\in[\ell-\delta+\Delta]$ and $f_\ell^q(a^p_1,\ldots,a^p_{\delta-\Delta},a^q,\Delta)+f_{\ell-\delta+\Delta}^r(a^r,0)<m_q+m_r$.
         Since we assumed that the claim holds for $q$ and $r$ it follows that there is a $(\chi(T_q)-\Delta)$-colouring $c'_q$ of $T_q$ and a $\chi(T_r)$-colouring $c'_r$ of $T_r$, both with Property~\ref{property1}, such that the $i$-th smallest colour class of $c'_q$ contains at most $a^p_i$ vertices for every $i\in[\delta-\Delta]$, the $(i+\delta-\Delta)$\nobreakdash-th smallest colour class of $c'_q$ contains at most $a^q_{i+\delta-\Delta}$ vertices for each $i\in[\ell-\delta+\Delta]$ and the $i$-th smallest colour class of $c_r'$ contains at most $a_i^r$ vertices for each $i\in[\ell-\delta+\Delta]$ and that the sum of monochromatic edges in $c'_q$ and $c'_r$ adds to less than $m_q+m_r$.
         

        Construct a $(\chi(T_q)-\Delta)$-colouring $c_p'$ of $T_p$ with Property~\ref{property1} such that the $i$-th smallest colour class of $c_p'$ contains exactly the vertices of the $i$-th smallest colour class of $c'_q$ for every $i\in[\delta-\Delta]$ and the $i$-th smallest colour class of $c_p'$ contains exactly the vertices of the $i$-th smallest colour class of $c'_q$ and the vertices of the $(i-\delta+\Delta)$-th smallest colour class of $c'_r$ for every $i\in\set{\delta-\Delta+1,\ldots,\chi(T_q)-\Delta}$.
        It follows that the size of the $i$-th smallest colour class of $c_p'$ has size at most $a^p_i$ for every $i\in[\ell]$.
        It is also clear that the number of monochromatic edges of $c_p'$ is exactly the sum of the numbers of monochromatic edges of $c'_q$ and $c'_r$ and thus less than $m_q+m_r$.
        So $c_p'$ is a $(\chi(T_q)-\Delta)$-colouring of $T_p$ with Property~\ref{property1} whose $i$-th colour class has size at most $a^p_i$ for every $i\in[\ell]$ and which has less monochromatic edges than $c_p$, a contradiction to the choice of $c_p$.
        
        \textbf{Case 3:} $\Delta<\delta$ and $\Delta+\ell<\delta$\\
        In this case, the colours used in the $\ell$ smallest colour classes of $c$ do not appear in $T_r$. 
        Clearly, $f_\ell^r(a^r,\Delta) = 0$ and thus $f_\ell^p(a^p, \Delta) \leq f_\ell^q(a^q,\Delta) + f_\ell^r(a^r,\Delta)\leq m_q$.

        Suppose for a contradiction that $f_\ell^q(a^q,\Delta)<m_q$.
        Then there is a $(\chi(T_q)-\Delta)$-colouring $c_q'$ of $T_q$ with Property~\ref{property1} such that the number of vertices in the $i$-th smallest colour class of $c_q'$ contains at most $a^p_i$ vertices for each $i\in[\ell]$ and the number of monochromatic edges of $c'_q$ is less than $m_q$.
        Let $c'_r$ be a proper colouring of $T_r$ with Property~\ref{property1} using $\chi(T_r)$ colours.
        Construct a $(\chi(T_p)-\Delta)$-colouring $c_p'$ of $T_p$ with Property~\ref{property1} such that the $i$-th smallest colour class of $c_p'$ contains exactly the vertices of the $i$-th smallest colour class of $c'_q$ for each $i\in[\delta-\Delta]$ and that the $i$-th smallest colour class of $c_p'$ contains exactly the vertices of the $i$-th smallest colour class of $c'_q$ and the $(i-\delta+\Delta)$-th smallest colour class of $c'_r$ for $i \in \set{\delta-\Delta +1 ,\ldots, \chi(T_r)-\Delta}$.
        Since $\ell<\delta-\Delta$, it follows that the $\ell$ smallest colour classes of $c_p'$ are all contained in $T_q$ and thus $c_p'$ is a $(\chi(T_p)-\Delta)$-colouring of $T_p$ whose $i$-th smallest colour class contains at most $a^p_i$ vertices for each $i\in[\ell]$.
        Since the number of monochromatic edges of $c_p'$ is equal to the number of monochromatic edges of $c'_q$, it follows that $c_p'$ has less monochromatic edges than $c_p$, a contradiction.
        This completes the third case and thus the claim holds for $p$ if $p$ is a 0-node.
        
       Assume that $p$ is a 1-node.
       Let $\lambda$ be the number of colours that are used by both $c_q$ and $c_r$.
       Following \Cref{1node} we can assume that only the $\ell+\lambda$ smallest colour classes have colours which appear in both $T_q$ and $T_r$.
       The number of colours used by~$c_p$ is $\chi(T_q)-\Delta_q+\chi(T_r)-\Delta_r-\lambda=\chi(T_p)-(\Delta_q+\Delta_r+\lambda)$, and so $\Delta_q+\Delta_r+\lambda=\Delta$.
       Let $\mu$ be a $\lambda$-matching on tuples of length $\ell+\lambda$ which matches $i$ with $j$ if and only if the $i$-th smallest colour class of $c_q$ and the $j$-th smallest colour class of $c_r$ have the same colour.
       It follows that $\merge(\mu,(a^q_1,\ldots,a^q_{\ell+\lambda}),(a^r_1,\ldots,a^r_{\ell+\lambda}),i)$ is the size of the $i$-th smallest colour class of $c_p$ for every $i\in[\ell]$.
       Observe that $c_q$ (and $c_r$, respectively) is a $(\chi(T_q)-\Delta_q)$-colouring of $T_q$ with Property~\ref{property1} (or a $(\chi(T_r)-\Delta_r)$-colouring of $T_r$ with Property~\ref{property1}, respectively) whose $i$-th colour class has exactly $a_i^q$ vertices ($a_i^r$ vertices, respectively) for every $i\in[\ell+\lambda]$ and thus, by the assumption that the claim holds for $q$ and $r$, $f_{\ell+\lambda}^q((a^q_1,\ldots,a^q_{\ell+\lambda}),\Delta_q)\leq m_q$ and $f_{\ell+\lambda}^r((a^r_1,\ldots,a^r_{\ell+\lambda}),\Delta_r)\leq m_r$.
       It is easy to see that the number of monochromatic edges of $c_p$ is $m_q+m_r+\val(\mu,a^q,a^r)$ and thus $f_\ell^p(a,\Delta)$ is smaller or equal than the number of monochromatic edges of $c_p$.

       Suppose for a contradiction that $f_\ell^p(a,\Delta)$ is strictly smaller than the number of monochromatic edges of $c_p$.
       This implies that there are 
       \begin{itemize}
           \item $\Delta'_q,\Delta'_r,\lambda'\geq 0$ with $\Delta'_q+\Delta'_r+\lambda'=\Delta$,
           \item tuples $b^q,b^r\in[0..n]^{\ell+\lambda'}$,
           \item a $\lambda'$-matching $\mu'$ of $(\ell+\lambda')$-tuples such that $\merge(\mu',b^q,b^r)_i\leq a_i^p$ for each $i\in[\ell]$ and
           \item $f_{\ell+\lambda}^q(b^q,\Delta'_q)+f_{\ell+\lambda}^r(b^r,\Delta'_r)+val(\mu',b^q,b^r)$ is strictly less than the number of monochromatic edges of $c_p$.
       \end{itemize}
       By the assumption of correctness of the claim for $q$ and $r$, there are
       \begin{itemize}
           \item a $(\chi(T_q)-\Delta'_q)$-colouring $c'_q$ of $T_q$ with Property~\ref{property1},
           \item a $(\chi(T_r)-\Delta'_r)$-colouring $c'_r$ of $T_r$ with Property~\ref{property1}, such that
           \item the number of monochromatic edges of $c'_q$ and $c'_r$ are $m'_q$ and $m'_r$, respectively, 
           \item the $i$-th smallest colour class of $c'_q$ (or of $c'_r$, respectively) has at most $b_i^q$ vertices ($b_i^r$ vertices, respectively) for each $i\in[\ell+\lambda]$,
           \item $m'_q+m'_r+\val(\mu',b^q,b^r)$ is strictly less than the number of monochromatic edges of $c_p$.
       \end{itemize}
       Construct a $(\chi(T_p)-\Delta)$-colouring $c_p'$ of $T_p$ with Property~\ref{property1} such that two vertices in $T_q$ (in $T_r$, respectively) obtain the same colour if and only if they obtain the same colour by $c'_q$ (by $c'_r$, respectively) and two vertices $w_q\in T_q$ and $w_r\in T_r$, which are contained in the $i$-th smallest colour class of $c'_q$ and in the $i'$-th smallest colour class of $T_r$, respectively, obtain the same colour if and only if $i$ and $i'$ are both at most $\ell+\lambda'$ and $\mu'$ matches $i$ with $i'$.
       It follows immediately from the definition of this new colouring that $c_p'$ is a $(\chi(T_p)-\Delta)$-colouring of $T_p$ with Property~\ref{property1} such that its $i$-th smallest colour class contains at most $a^p_i$ vertices for every $i\in[\ell]$ and which has less monochromatic edges than $c_p$, a contradiction to the choice of $c_p$.

       This shows the claim.

       It follows from the correctness of the claim that $f_0^s((),d)$, where $s$ is the root of $T$ and the first argument is an empty tuple, is the smallest number of monochromatic edges that any $(\chi(G)-d)$-colouring of $G$ satisfying Property~\ref{property1} can have.
       It follows from \Cref{lem-structcolouring} that this coincides with the smallest number of monochromatic edges amongst \emph{all} $(\chi(G)-d)$-colourings of $G$.
       This shows that we can indeed solve $(\chi(G)-d)$-\textsc{Monochromatic Edges} in polynomial time.

\end{proof}

\subsection{Hardness proofs}

\begin{theorem}
    \textsc{Monochromatic Edges} is NP-hard on complete multipartite graphs.
\end{theorem}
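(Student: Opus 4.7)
The plan is to reduce from \textsc{3-Partition}, which is strongly \np-hard. Recall that an instance consists of $3m$ positive integers $a_1,\ldots,a_{3m}$ with $B/4 < a_i < B/2$ for each $i$ and $\sum_i a_i = mB$; the question is whether $\{a_1,\ldots,a_{3m}\}$ can be split into $m$ triples each summing to $B$. From such an instance I will build the complete multipartite graph $G$ whose $3m$ parts have sizes $a_1,\ldots,a_{3m}$, set the number of colours to $h=m$, and take the threshold $M = \tfrac12\bigl(mB^2 - \sum_i a_i^2\bigr)$. Because $B$ is polynomially bounded in the input of \textsc{3-Partition}, the graph $G$ has polynomially many vertices and $M$ is a non-negative integer.

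The key structural observation is that we may restrict to colourings in which every part is monochromatic. Indeed, each part $I_i$ is an independent set whose vertices all share the neighbourhood $V(G)\setminus I_i$, so \Cref{Indepsets} applies and, starting from any $h$-colouring, we can repeatedly collapse each part to a single colour without ever increasing the number of monochromatic edges. Once every part is monochromatic, letting $N_c$ denote the total number of vertices of colour $c$, the monochromatic edges are precisely the edges between two parts that share a colour, so their number equals
\[
   \sum_{c=1}^{h}\sum_{\substack{i<j\\ I_i,\,I_j\text{ get colour }c}} a_i a_j \;=\; \tfrac12\Bigl(\sum_{c=1}^{h}N_c^2 - \sum_{i=1}^{3m}a_i^2\Bigr),
\]
and minimising this is equivalent to minimising $\sum_c N_c^2$ over all partitions of $\{a_1,\ldots,a_{3m}\}$ into $m$ blocks.

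It remains to check that the minimum equals $M$ iff the \textsc{3-Partition} instance is a \yes-instance. The forward direction is immediate: the triples yield $N_c = B$ for every $c$, giving $\sum_c N_c^2 = mB^2$ and exactly $M$ monochromatic edges. For the converse, a simple swap argument shows that if integers $N_1,\ldots,N_m\geq 0$ sum to $mB$ but are not all equal to $B$, picking $c_1,c_2$ with $N_{c_1}\geq B+1$ and $N_{c_2}\leq B-1$ and transferring one unit decreases $\sum_c N_c^2$ by at least $2$; hence any non-balanced partition satisfies $\sum_c N_c^2 \geq mB^2+2$, yielding at least $M+1$ monochromatic edges. The main obstacle is the first step, that is, justifying via \Cref{Indepsets} that restricting to monochromatic parts does not change the optimal value (so that a potentially helpful splitting of parts across several colours can always be undone); once this reduction is in place, the remainder is a clean arithmetic argument on integer partitions.
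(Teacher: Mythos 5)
Your proof is correct and the core construction coincides with the paper's: you build the complete multipartite graph whose parts have the prescribed sizes, invoke \Cref{Indepsets} part by part to collapse each part to a single colour, and use the identity expressing the number of monochromatic edges as $\tfrac12\bigl(\sum_c N_c^2-\sum_i a_i^2\bigr)$. The only genuine difference is the source problem. The paper reduces from \textsc{Minimum Sum of Squares} (strongly \np-hard by Garey--Johnson), so its threshold $J$ translates verbatim into the threshold on monochromatic edges and no further argument is needed. You reduce from \textsc{3-Partition}, which obliges you to add two extra observations: the convexity/swap argument showing that $\sum_c N_c^2\geq mB^2$ for non-negative integers summing to $mB$, with equality only in the balanced case, and the standard fact that $B/4<a_i<B/2$ forces every block summing to $B$ to be a triple. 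Both are fine as stated; in particular the swap argument is a claim about arbitrary integer tuples summing to $mB$, so it is irrelevant that a single unit cannot actually be moved between colour classes when the parts are atomic. What your route buys is self-containedness --- \textsc{3-Partition} is the more standard starting point, and you have effectively inlined the known reduction from it to \textsc{Minimum Sum of Squares} --- while the paper's route buys brevity, since the quantity bounded in \textsc{Minimum Sum of Squares} is already exactly the quantity being minimized. In both versions it is the strong \np-hardness of the source problem that keeps $|V(G)|$ polynomial, which you correctly point out.
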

\begin{proof}

    We reduce from \textsc{Minimum Sum of Squares}, which takes as input an integer~$\ell$, an $\ell$-tuple $a=(a_1,\ldots,a_{\ell})$ of integers, an integer $h$ and an integer $J$.
    It asks whether $[\ell]$ can be partitioned into $h$ sets $A_i$, $i\in [h]$, such that
    $$\sum_{i=1}^h\left(\sum_{j\in A_i}a_j\right)^2\leq J.$$
    It was shown in \cite{GareyJohnson} that this problem is NP-hard, when we consider $\sum_{j\in [\ell]}a_j$ as the size of an instance.
    Given an instance $(\ell,a,h,J)$ of \textsc{Minimum Sum of Squares}, we set $D=\frac{1}{2}\sum_{j=1}^\ell a_j^2$.
    We construct a complete multipartite graph $G$ as follows:
    for every $j\in [\ell]$, let $U_j$ be a set of $a_j$ vertices such that $U_j$ and $U_{j'}$ are disjoint for every $j,j'\in [\ell]$.
    Set $V(G)=\bigcup_{j\in [\ell]}U_j$ and let $E(G)$ be such that two vertices $v\in U_j$ and $w\in U_{j'}$ are adjacent if and only if $j\neq j'$.
    We claim that $(G,h,\frac{1}{2}J-D)$ is a \yes-instance for \textsc{Monochromatic Edges} if and only if $(\ell,a,h,J)$ is a \yes-instance for \textsc{Minimum Sum of Squares}.
    
    Assume first that $(G,h,\frac{1}{2}J-D)$ is a \yes-instance for \textsc{Monochromatic Edges}.
    We know from \Cref{Indepsets} that there is an $h$-colouring $c$ of $G$ minimizing the number of monochromatic edges such that $\vert c(U_j)\vert=1$ for every $j\in [\ell]$. 

    For every $i\in[h]$, let $A_i\subseteq [\ell]$ be such that $j\in [\ell]$ is contained in $A_i$ if and only if the vertices of $U_j$ are coloured  with colour $i$.
    Then the number of monochromatic edges whose both ends are coloured with colour $i$ equals $\frac{1}{2}(\sum_{j\in A_i}a_j)^2 - \sum_{j\in A_i}a_j^2 $ and the total number of monochromatic edges is
    \[\frac{1}{2} \sum_{i=1}^h\left( \left(\sum_{j\in A_i}a_j\right)^2 - \sum_{j\in A_i}a_j^2\right) =  \frac{1}{2}\sum_{i=1}^h \left(\sum_{j\in A_i}a_j\right)^2-D.\]
    Thus, \[\sum_{i=1}^h\left(\sum_{j\in A_i}a_j\right)^2 \leq 2\left(\frac{1}{2}J-D + D\right) = J.\]
    It follows that $(\ell,a,h,J)$ is a \yes-instance for \textsc{Minimum Sum of Squares}.
    
    Assume now that $(\ell,a,h,J)$ is a \yes-instance for \textsc{Minimum Sum of Squares}.
    For every $j \in A_i$, $i \in [h]$, we colour the corresponding vertex set $U_j$ in $G$ with colour~$i$. 
    As above, the number of monochromatic edges between vertices with colour $i$ is $\frac{1}{2}\sum_{i=1}^h \left(\sum_{j\in A_i}a_j\right)^2-D\leq \frac{1}{2}J-D$. 

    Thus, $(G,h,\frac{1}{2}J-D)$ is a \yes-instance for \textsc{Monochromatic Edges}.
    

\end{proof}

\section*{Acknowledgement}
The authors thank Carolina Luc\'{i}a Gonzalez for helpful discussions about \Cref{bigtheorem}.

\section*{Declarations of competing interests}

The authors have no relevant financial or non-financial interests to disclose.

%
%

\bibliographystyle{splncs04}
\bibliography{sn-bibliography}
\end{document}